\documentclass[11pt,reqno]{amsart}

\usepackage[usenames,dvipsnames]{color}

\usepackage{amsmath}
\usepackage{amssymb, latexsym, amsfonts, amscd, amsthm, mathrsfs, enumerate, esint}

\topmargin=0in
\oddsidemargin=0in
\evensidemargin=0in
\textwidth=6.5in
\textheight=8.5in

\newcommand{\rc}{\kappa(x,y)}


\newtheorem{thm}{Theorem}[section]
\newtheorem{cor}[thm]{Corollary}
\newtheorem{lem}{Lemma}[section]

\theoremstyle{definition}
\newtheorem{defn}{Definition}[section]
\newtheorem{remark}{Remark}[section]
\newcommand{\R}{\mathbb{R}}

\newcommand{\G}{\mathbb{G}}
\newcommand{\cA}{\mathcal{A}}

\newcommand{\sD}{\mathscr{D}}

\newcommand{\sF}{\mathscr{F}}
\newcommand{\sH}{\mathscr{H}}
\newcommand{\sL}{\mathscr{L}}
\newcommand{\sP}{\mathscr{P}}

\DeclareMathOperator{\diam}{diam}



\usepackage{graphicx}

\usepackage{amssymb,amsfonts,amsmath}



\begin{document}




\title{Exact and Asymptotic Results on Coarse Ricci Curvature of Graphs}

\author[Bhattacharya]{Bhaswar B. Bhattacharya}
\address{Department of Statistics, University of Pennsylvania, Philadelphia, USA,
{\tt bhaswar@wharton.upenn.edu}}

\author[Mukherjee]{Sumit Mukherjee}
\address{Department of Statistics, Columbia University, New York, USA, {\tt  sm3949@columbia.edu}}

\begin{abstract}  
Ricci curvature was proposed by Ollivier in a general framework of metric measure spaces, and it has been studied extensively in the context of graphs in recent years. In this paper we prove upper bounds for Ollivier's Ricci curvature for bipartite graphs and for the graphs with girth at least 5. We also prove a general lower bound on the Ricci curvature in terms of the size of the maximum matching in an appropriate subgraph.  As a consequence, we characterize the Ricci-flat graphs of girth 5. Moreover, using our general lower bound and the Birkhoff-von Neumann theorem, we give a necessary and sufficient condition for the structure of Ricci-flat regular graphs of girth 4. Finally, we obtain the asymptotic Ricci curvature of random bipartite graphs $G(n,n, p)$ and random graphs $G(n, p)$, in various regimes of $p$.
\end{abstract}

\subjclass[2010]{90C08, 05C70, 52B12}
\keywords{Graph curvature, Optimal transportation, Random graphs, Wassertein's distance.}

\maketitle


\section{Introduction}

Ricci curvature is a fundamental concept in Riemannian geometry, which provides a way of measuring the degree to which the geometry determined by a given Riemannian metric might differ from that of $\mathbb R^n$.  Ricci curvature plays an important role in general relativity, where it is the key term in the Einstein field equations, and in the celebrated Ricci flow equation, where a time-dependent Riemannian metric is deformed in the direction of its negative its Ricci curvature.  Bakry and \'Emery \cite{bakryemery} attempted to define Ricci curvature through the heat semigroup on a metric measure space. In the recent years, there have been several work on defining a synthetic Ricci curvature on general metric measure spaces by Sturm \cite{sturmI,sturmII}, Lott and Villani \cite{lottvillani}, and Ohta \cite{ohta}.

In the context of graphs, Chung and Yau \cite{chungyau} developed the notion of Ricci-flat graphs, while proving log-Sobolev inequalities. Later, Lin and Yau \cite{linmrl} generalized the notion of Bakry and \'Emery to the framework of graphs. Finally, Ollivier \cite{ollivierthesis} introduced a notion of coarse Ricci curvature that extends to Markov chains on metric spaces. This was used to generalize a series of classical theorems in positive Ricci curvature, such as spectral gap estimates, concentration of measure or log-Sobolev inequalities \cite{ollivier,olliviersurvey}. Joulin and Ollivier \cite{joulinollivier} proved nonasymptotic estimates for the rate of convergence of empirical means of Markov chains, together with a Gaussian or exponential control on the deviations of empirical means, under the assumption of positive curvature of the underlying space. This assumption reduces to the well-known contraction under path coupling when the underlying space is a finite graph, which has been used extensively to prove fast mixing of several discrete Markov chains (refer to Chapter 14 of Levin et al. \cite{peresbook} for details on path coupling and its application to fast mixing and approximate counting of proper $q$-colorings of a graph).  In Riemannian geometry, both Ollivier's Ricci curvature and Bakry-\'Emery curvature-dimension inequality gives lower bound estimates for the first eigenvalue of the Laplace operator \cite{bakry,bauer_jost_liu,ollivier}. For a detailed exposition on geometric methods concerning eigenvalue estimates in the study of Markov chains refer to \cite{diaconis_mcmc,diaconis_stroock,fulman} and the references therein.

Recently, Ollivier's Ricci curvature has been studied in the context of graphs by Jost and Liu \cite{jost}, Paeng \cite{paeng}, and Cho and Paeng \cite{chopaeng}. Very recently, Loisel and Romon \cite{loisel_romon} described a method for computing  Ollivier's Ricci curvature for polyhedral surfaces and discussed the connections with linear programming. Lin et al. \cite{ricciflat,tohoku} considered a modified definition of Ricci curvature of graphs, and proved several analogous results. In this paper we study Ollivier's Ricci curvature on graphs using the Markov kernel of the simple random walk on the graph. We begin with upper bounds for the Ricci curvature for bipartite graphs and for the graphs with girth at least 5. 
Using these results we characterize Ricci-flat graphs of girth at least 5. We also prove several other bounds on the Ricci curvature, involving different graph parameters. The most interesting among them is the derivation of a general lower bound in terms of the size of the maximum matching in an appropriate neighborhood subgraph. Using this we give a necessary and sufficient condition on the structure of Ricci-flat regular graphs of girth 4. Finally, using some results from approximate matching in random graphs, we study the asymptotic behavior of Ricci curvature of random bipartite graphs $G(n,n, p)$ and random graphs $G(n, p)$, in various regimes of $p$. 


\subsection{Ollivier's Ricci curvature: Definitions and Notations}

In this section we recall some basic facts about Ollivier's Ricci curvature on graphs and introduce other relevant definitions and notation.

For two probability measures $\mu_1, \mu_2$ on a metric space $(X, d)$, the {\em transportation distance} (or the {\em Wasserstein distance}) between them is defined as
\begin{equation}
W_1(\mu_1, \mu_2)=\inf_{\nu \in M(\mu_1, \mu_2)}\int_{X\times X}d(x, y)\mathrm d\nu(x, y),
\label{eq:wd}
\end{equation}
where $M(\mu_1, \mu_2)$ is the collection of probability measures on $X\times X$ with marginals $\mu_1$ and $\mu_2$.
Another useful representation of the transportation distance is through the celebrated Kantorovich duality (Theorem 1.14, Villiani \cite{villani}), which states that 
\begin{equation}
W_1(\mu_1, \mu_2)=\sup_{f, 1-Lip}\left\{\int_X f\mathrm d\mu_1-\int_X f\mathrm d\mu_2\right\},
\label{eq:wdduality}
\end{equation}
where the supremum is taken over all functions $f:X\rightarrow \mathbb R$ which satisfy $|f(x)-f(y)|\leq d(x, y)$, for all $x, y \in X$.


The transportation distance between probability measure is used to define the Ricci curvature of metric measure spaces. A metric measure space $(X, d, m)$ is a metric space $(X, d)$, and a collection of probability measures $m=\{m_x: x \in X\}$ indexed by the points of $X$. The coarse Ricci curvature of a metric measure space is defined as follows:

\begin{defn}[Ollivier \cite{ollivierthesis}]On any metric measure space $(X, d,m)$, for any two distinct points $x, y \in X$, the {\em coarse Ricci curvature} of $(X, d, m)$ of $(x, y)$ is defined as $\kappa(x, y) := 1-\frac{W_1(m_x, m_y)}{d(x, y)}$.
\label{def:ollivierdef}
\end{defn}

Hereafter, we shall refer to Ollivier's coarse Ricci curvature simply as Ricci curvature and we shall study it for locally finite graphs. Consider a locally finite weighted simple graph $G=(V(G), E(G))$, where each edge $(x, y)\in E(G)$ is assigned a positive weight $w_{xy}=w_{yx}$. The graph is equipped with the standard shortest path graph distance $d_G$, that is, for $x, y \in V(G)$, $d_G(x,y)$ is the length of the shortest path in $G$ connecting $x$ and $y$.  The {\it girth} of $G$ is the length of the shortest cycle in $G$. For $x\in V(G)$ define the degree $d_x=\sum_{(x, y)\in E(G)}w_{xy}$ and the neighborhood $N_G(x)=\{y\in V(G): (x, y)\in E(G)\}$. For each $x \in V(G)$ define a probability measure 
$$m_x(y)=\left\{
\begin{array}{cc}
\frac{w_{xy}}{d_x}, &  \hbox{if } y\in N_G(x)   \\
0, &  \hbox{otherwise.}
\end{array}
\right.
$$
Note that these are just the transition probabilities of a weighted random walk on the vertices of $G$. If $m_G=\{m_x: x \in V(G)\}$, then considering the metric measure space $\mathcal M(G):=(V(G), d_G, m_G)$, we can define the Ricci curvature for any edge $(x, y)\in E(G)$ as $\kappa_G(x,y):=1-W_1^G(m_x, m_y)$. Applying Equation (\ref{eq:wd}) for $\mathcal M(G)$ we get 
\begin{equation}
\label{rcdef2}
W_1^G(m_x, m_y)=\inf_{\nu\in \mathcal{A}}\sum_{z_1\in N_G(x)}\sum_{z_2\in N_G(y)}\nu(z_1,z_2)d(z_1,z_2),
\end{equation}
where $\mathcal{A}$ denotes the set of all $d_x\times d_y$ matrices with entries indexed by $N_G(x)\times N_G(y)$ such that $\nu(x',y')\ge 0$, $\sum_{z\in N_G(y)}\nu(x',z)=\frac{w_{xx'}}{d_x}$, and $\sum_{z\in N_G(x)}\nu(z,y')=\frac{w_{yy'}}{d_y}$, for all $x'\in N_G(x)$ and $y'\in N_G(y)$. Intuitively, the Wasserstein distance measures the optimal cost to move one pile of sand to another one with the same mass. For a matrix $\nu\in \cA$, $\nu(x', y')$ represents the mass moving from $x'\in N_G(x)$ to $y'\in N_G(y)$. For this reason, the matrix $\nu$ is often called the {\it transfer plan}.

By the Kantorovich duality in Equation (\ref{eq:wdduality}) we can also write 
\begin{equation}
\label{rcdef}
W_1^G(m_x,m_y)=\sup_{f, 1-Lip}\left\{\sum_{z\in N_G(x)} f(z)m_x(z)-\sum_{z\in N_G(y)} f(z)m_y(z)\right\}.
\end{equation}

Henceforth, we denote by $\sL_1$ the set of all 1-Lipschitz functions on $G$, that is, the set of all functions
$f:V(G)\rightarrow \mathbb R$ such that $|f(x)-f(y)|\leq d_G(x, y)$, for $x, y \in V(G)$. For any $x \in V(G)$ and any function $f \in \sL_1$, define $E_x(f)=\sum_{z\in N_G(x)} f(z)m_x(z)$.  With these notation, Equation (\ref{rcdef}) now becomes $W_1^G(m_x, m_y)=\sup_{f \in \sL_1}\{E_x(f)-E_y(f)$\}. By triangle inequality \cite{ollivierthesis}, if $\kappa_G(x, y)\geq \gamma$ for all neighbors $(x, y)\in E(G)$, then $\kappa_G(x, y)\geq \gamma$, for all $x, y\in V(G)$. Therefore, it is reasonable to consider $\kappa_G(x, y)$ only for neighboring vertices $(x, y)\in E(G)$.

Hereafter, the subscript and superscript $G$ from $\kappa_G$, $W_1^G$, and $d_G$ will be often omitted when the graph is clear from the context. For notational brevity, the main theorems will be stated for unweighted graphs, that is, $w_{xy}=1$, for $(x, y)\in E(G)$. 
Finally, for $a, b\in 
\R$, define $a_{+}:=\max\{a, 0\}$, $a\wedge b:=\min\{a, b\}$, and $a\vee b:=\max\{a, b\}$.


\subsection{Prior Work on Ricci Curvature of Graphs} Recently, there has been a series of papers on coarse Ricci curvature when the metric space is a graph $G$. Jost and Liu proved the following general bounds:

\begin{thm}[Jost and Liu \cite{jost}]
\label{th:jost}
For any locally finite unweighted graph $G$, with $(x, y)\in E(G)$,
\begin{equation}
\frac{|\Delta_G(x, y)|}{d_x\vee d_y}-\left(1-\frac{1}{d_x}-\frac{1}{d_y}-\frac{|\Delta_G(x, y)|}{d_x\wedge d_y}\right)_+-\left(1-\frac{1}{d_x}-\frac{1}{d_y}-\frac{|\Delta_G(x, y)|}{d_x\vee d_y}\right)_+\le \rc \le \frac{|\Delta_G(x, y)|}{d_x\vee d_y},
\label{eq:jlcp}
\end{equation}
where $\Delta_G(x, y)$ is the number of triangles supported on $(x, y)$.
\end{thm}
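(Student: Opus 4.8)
We may assume $d_x\ge d_y$, since every quantity in the asserted bounds ($|\Delta_G(x,y)|$, $d_x\vee d_y$, $d_x\wedge d_y$, and $\tfrac1{d_x}+\tfrac1{d_y}$) is symmetric in $x$ and $y$, as is $\kappa(x,y)=1-W_1(m_x,m_y)$. Write $\tau=|\Delta_G(x,y)|$ for the number of common neighbours and partition the neighbourhoods as $N(x)=\{y\}\cup C\cup A$ and $N(y)=\{x\}\cup C\cup B$, where $C=N(x)\cap N(y)$ has size $\tau$, $A=N(x)\setminus(N(y)\cup\{y\})$ has size $d_x-1-\tau$, and $B=N(y)\setminus(N(x)\cup\{x\})$ has size $d_y-1-\tau$. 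For the upper bound $\kappa(x,y)\le\tau/(d_x\vee d_y)$, it suffices, since $\kappa(x,y)=1-W_1$, to prove the lower bound $W_1\ge 1-\tau/d_x$, which I would obtain from the dual formulation (\ref{rcdef}) by exhibiting one good test function. The plan is to take $f(z)=\min\{1,d(z,N(y))\}$, which is $1$-Lipschitz as a truncation of the $1$-Lipschitz map $z\mapsto d(z,N(y))$. It vanishes on $N(y)$, so $E_y(f)=0$, while on $N(x)$ it equals $1$ on $\{y\}\cup A$ and $0$ on $C$, so $E_x(f)=(1+|A|)/d_x=1-\tau/d_x$. Hence $W_1\ge E_x(f)-E_y(f)=1-\tau/(d_x\vee d_y)$, as required.

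The lower bound on $\kappa(x,y)$ is the harder half, and amounts to the matching upper bound $W_1\le 1-\tau/d_x+T_1+T_2$, where $T_1=(1-1/d_x-(1+\tau)/d_y)_+$ and $T_2=(1-(1+\tau)/d_x-1/d_y)_+$ are the two truncated correction terms rewritten using $d_x\vee d_y=d_x$ and $d_x\wedge d_y=d_y$. Here I would exhibit an explicit coupling $\nu\in\mathcal{A}$ and bound its cost through (\ref{rcdef2}). First I keep mass $1/d_x$ in place at each common neighbour $c\in C$ (cost $0$); this exactly exhausts the $m_x$-mass on $C$ and leaves a residual $m_y$-demand of $1/d_y-1/d_x\ge 0$ at each $c$. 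It then remains to transport the residual sources $y$ (mass $1/d_x$) and $A$ (total mass $1-(1+\tau)/d_x$) onto the sinks $x$ (demand $1/d_y$), $C$ (total residual demand $\tau(1/d_y-1/d_x)$), and $B$ (total demand $1-(1+\tau)/d_y$). Because any feasible coupling furnishes an upper bound, I may replace each transport distance by a crude overestimate that it provably satisfies: $d(y,\cdot)=1$ to each of $x,C,B$ (all lie in $N(y)$), and $d(a,x)=1$, $d(a,c)\le 2$, $d(a,b)\le 3$ for $a\in A$.

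The construction then routes the flexible source $y$, which reaches every sink at cost $1$, onto the sinks that are expensive for $A$ — first $B$, then $C$ — while $A$ is sent to $x$ first and only afterwards to $C$ and $B$. The baseline cost is $1-\tau/d_x$, the total residual mass moved at unit cost, and the only excess comes from mass $A$ is forced to send to $C$ (an extra $1$ per unit) or to $B$ (an extra $2$ per unit). Carrying out the short case analysis according to how much of $B$ and then $C$ the mass at $y$ can absorb — equivalently according to the signs of $T_1$ and $T_2$ — the excess is seen to be at most $T_1+T_2$, the tightest case relying on the algebraic identity $(\tau/d_y-\tau/d_x)+T_1=T_2$, which makes the correction collapse to exactly $T_1+T_2$. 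I expect this bookkeeping, namely ordering the outflow from $y$ correctly and tracking which correction term switches on, to be the main obstacle; the geometric input (the distance overestimates and the in-place matching on $C$) is routine.
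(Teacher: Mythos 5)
Your proposal is correct, but there is nothing in the paper to compare it against: Theorem \ref{th:jost} is stated as a quoted result of Jost and Liu \cite{jost}, and the paper gives no proof of it (it only reproves the special case $g(G)\ge 6$ via Lemma \ref{truncate} and Corollary \ref{gsix}). Your argument is a sound, self-contained reconstruction in the spirit of the original: the upper bound via the Kantorovich dual with the test function $f=\min\{1,d(\cdot,N(y))\}$, which vanishes on $N(y)$ and equals $1$ on $\{y\}\cup A$, giving $W_1\ge 1-\tau/(d_x\vee d_y)$ where $\tau=|\Delta_G(x,y)|$; and the lower bound via an explicit feasible coupling. I checked that your greedy routing and case analysis do close. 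With $d_x\ge d_y$, $T_1=(1-1/d_x-(1+\tau)/d_y)_+$ and $T_2=(1-(1+\tau)/d_x-1/d_y)_+$, one has $T_1\le T_2$ because the two inner expressions differ by $\tau(1/d_y-1/d_x)\ge 0$, so only three cases occur. If $T_1=T_2=0$, the mass at $y$ absorbs all of $B$ and the residual demand on $C$, every unit moves distance $1$, and the cost is $1-\tau/d_x$. If $T_1=0<T_2$, the set $A$ must ship exactly $T_2$ to $C$ at distance $2$, for cost $1-\tau/d_x+T_2$. If $T_1>0$ (which forces $T_2>0$), then $A$ ships $\tau(1/d_y-1/d_x)$ to $C$ at distance $\le 2$ and $T_1$ to $B$ at distance $\le 3$, and the excess $\tau(1/d_y-1/d_x)+2T_1$ equals exactly $T_1+T_2$ by your identity $\tau/d_y-\tau/d_x+T_1=T_2$, which is valid here since both expressions are un-truncated. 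Hence $W_1\le 1-\tau/d_x+T_1+T_2$ in all cases, which is the claimed lower bound on $\kappa(x,y)$. Two small points to make explicit in a final write-up: the in-place matching on $C$ uses $1/d_x\le 1/d_y$, which is precisely where the degree ordering enters; and the flow you describe on set totals determines a feasible $\nu\in\mathcal{A}$ because your distance overestimates are uniform over each of the sets $A$, $B$, $C$, so the within-set allocation is immaterial.
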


They also show that the above lower bound is tight for trees. In fact, it is clear from their proof that the  lower bound is an equality whenever there are no 3, 4, or 5 cycles supported on $(x, y)$.  In particular, the lower bound is tight whenever $g(G)\geq 6$, where $g(G)$ is the girth of the graph $G$.  Recently, Cho and Paeng \cite{chopaeng} proved this independently, and also showed that the girth condition and the tree formula, obtained by putting $|\Delta_G(x,y)|=0$ in the lower bound in (\ref{eq:jlcp}), are equivalent in the following sense: the tree formula holds for all $(x, y)\in E(G)$, if and only if $g(G)\geq 6$. Cho and Paeng \cite{chopaeng} also proved other Ricci curvature bounds involving girth. In particular, they showed that if $g(G) \geq  5$, then $\rc \leq -1+ \frac{2}{\delta}$, where $\delta:=\delta(G)$ is the minimum degree in $G$, and $(x, y)\in E(G)$. They also obtained interesting lower bounds on the clique number and chromatic number of a graph with the Ricci curvature. Paeng \cite{paeng} used Ollivier's Ricci curvature to obtain upper bounds of diameter and volume for finite graphs. The relation between Ollivier's Ricci curvature and the first eigenvalue was studied by Bauer et al. \cite{bauer_jost_liu}.



Lin et al. \cite{tohoku} introduced a different notion of Ricci curvature on graphs by modifying Ollivier's definition. It is defined as the differential limit of a lazy random random walk on the graph, and we shall refer to it as the modified Ricci curvature to distinguish it from Ollivier's coarse Ricci curvature. The modified definition has some properties which are similar to the original definition, however, in several contexts they are very different. Using the modified definition, they proved a theorem on the modified Ricci curvature of the Cartesian product of graphs. They established upper bounds for diameters and the number of vertices for graphs with positive curvatures, and also proved some asymptotic properties of modified Ricci curvature for random graphs. Recently, Lin et al. \cite{ricciflat} characterized the set of all modified Ricci-flat graphs with girth at least 5, where a graph is called modified Ricci-flat whenever it has modified Ricci curvature zero for every edge in the graph. They showed that if $G$ is a connected modified Ricci-flat graph with girth $g(G) \geq  5$, then $G$ is the infinite path, or a cycle $C_n$ with $n \geq  6$, the dodecahedral graph, the Petersen graph, or the half-dodecahedral graph.

\subsection{Summary of Our Results and Organization of the Paper}

In this paper we obtain various bounds on the Ollivier's Ricci curvature. We begin with upper bounds for bipartite graphs and for the graphs with girth at least 5.  For a bipartite graph $G$ with $(x, y)\in E(G)$, we obtain a upper bound of the form $$\rc \leq -2\left(1-\frac{1}{d_x}-\frac{1}{d_y}- C_G(x, y) \right)_{+},$$
where the explicit form of $C_G(x, y)$ is given in Theorem \ref{th:bipartite}. A similar bound is obtained for graphs with girth greater than 4 (Theorem \ref{th:gfive}). As a consequence of this bound, we characterize the set of all Ricci-flat graphs of girth at least 5, where a graph $G$ is said to Ricci-flat if $\rc=0$, for all $(x, y)\in E(G)$ (Corollary \ref{cor:ricciflatfive}). This is in analogue to the result on modified Ricci curvature of Lin et al. \cite{ricciflat} in the context of Ollivier's coarse Ricci curvature.

In Theorem \ref{th:matching} we prove a general lower bound on the Ricci curvature $\rc$ in terms of the size of the matching among the non-common neighbors of $x$ and $y$ in the graph $G$. This bound is often tight, especially in regular graphs which have a perfect matching between the non-common neighbors of $x$ and $y$.  As the set of all transportation matrices in $d$-regular graphs is related to the famous Birkhoff polytope, our lower bound result combined with the celebrated Birkhoff-von Neumann theorem gives a necessary and sufficient condition on the structure of Ricci-flat regular graphs of girth 4 (Corollary \ref{cor:ricciflat4}).

Finally, we also study the Ricci curvature of random bipartite graphs $G(n,n, p)$ (Theorem \ref{bp}) and random graphs $G(n, p)$ (Theorem \ref{gnp}), in various regimes of $p$. Using a stronger version of the Hall's marriage theorem, and the existence of near-perfect matching in random bipartite graphs, we obtain the limiting behavior of the Ricci curvature in the regimes of $p$ where it has a constant limit in probability. We also show that when $np_n\rightarrow \lambda$, that is, the graph is locally tree-like, the Ricci curvature converges in distribution to the tree formula of Jost and Liu \cite{jost}. These are the first known results for Ollivier's Ricci curvature for Erd\H os-Renyi random graphs. The analogous version of these results using the modified Ricci curvature were obtained by Lin et al. \cite{tohoku}. They showed almost sure convergence to constant limits, but could not capture all the different regimes of $p$.

In Section \ref{sec:preliminaries} we prove several important lemmas which build the foundations for proving the main results. We show that the computation of Ricci curvature on a graph can be formulated as a totally unimodular linear programming problem, and so it suffices to optimize over integer valued 1-Lipschitz functions. We also prove a crucially important reduction lemma where we identify the exact neighborhood of an edge $(x, y)\in E(G)$ that needs to be considered while computing the  Ricci curvature $\kappa(x, y)$.

\section{Preliminaries}
\label{sec:preliminaries}


We begin by proving an extension lemma for Lipschitz functions on graphs. Let $G=(V(G), E(G))$ be a locally finite weighted graph. Let $U\subset V(G)$ be a fixed subset of vertices, and $d_G$ the shortest path metric on $G$. 

\begin{lem}
\label{lip}
Any 1-Lipschitz function $g:U\rightarrow\mathbb R$, that is, $|g(a)-g(b)|\le d_{G}(a,b)$, 
for $a, b\in U$, can be extended to a 1-Lipschitz function $\overline{g}:V(G)\rightarrow \mathbb R$ on $G$, that is, $|g(a)-g(b)|\le d_{G}(a,b)$,  for $a, b\in V$.
\end{lem}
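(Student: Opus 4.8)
The plan is to prove this by an explicit construction, extending $g$ one vertex at a time (or, equivalently, defining $\overline{g}$ on all of $V(G)$ by a single formula) and verifying the Lipschitz condition. The natural candidate is the \emph{inf-convolution} formula
\begin{equation}
\overline{g}(v) := \inf_{a \in U}\left\{g(a) + d_G(v, a)\right\}, \qquad v \in V(G).
\label{eq:infconv}
\end{equation}
Since $G$ is finite (hence $U$ is finite), the infimum is attained and $\overline{g}$ is well-defined and finite. First I would check that $\overline{g}$ genuinely extends $g$: for $b \in U$, taking $a = b$ gives $\overline{g}(b) \le g(b)$, while for any $a \in U$ the 1-Lipschitz hypothesis on $g$ gives $g(a) + d_G(b,a) \ge g(a) + (g(b) - g(a)) = g(b)$, so $\overline{g}(b) \ge g(b)$; hence $\overline{g}(b) = g(b)$.

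Next I would verify that $\overline{g}$ is 1-Lipschitz on all of $V(G)$. Fix $u, v \in V(G)$ and let $a \in U$ attain the infimum defining $\overline{g}(u)$, so $\overline{g}(u) = g(a) + d_G(u,a)$. Then by the definition of $\overline{g}(v)$ as an infimum and the triangle inequality for $d_G$,
\begin{equation}
\overline{g}(v) \le g(a) + d_G(v,a) \le g(a) + d_G(u,a) + d_G(u,v) = \overline{g}(u) + d_G(u,v).
\label{eq:liponeside}
\end{equation}
By symmetry in $u$ and $v$ we obtain $\overline{g}(u) \le \overline{g}(v) + d_G(u,v)$ as well, and the two together give $|\overline{g}(u) - \overline{g}(v)| \le d_G(u,v)$, which is exactly the 1-Lipschitz condition on $G$.

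I do not anticipate a genuine obstacle here, since this is the standard McShane--Whitney type extension and the triangle inequality does all the work; the finiteness of $G$ removes any concern about the infimum being attained or finite. The only point that warrants a word of care is that the metric used in both the hypothesis and the conclusion is the \emph{same} graph metric $d_G$ restricted to $U$ versus defined on all of $V(G)$, so no comparison between an intrinsic metric on $U$ and the ambient metric is needed; the inf-convolution formula is tailored precisely to this situation. One could alternatively phrase the argument as a one-vertex-at-a-time induction, at each stage extending to a new vertex $w$ by choosing any value in the nonempty interval $\bigl[\max_{a}(g(a) - d_G(w,a)),\ \min_{a}(g(a) + d_G(w,a))\bigr]$ and checking nonemptiness via the triangle inequality, but the closed-form formula \eqref{eq:infconv} is cleaner and I would present that.
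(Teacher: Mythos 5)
Your proof is correct, but it takes a different (though closely related) route from the paper's. You use the closed-form McShane--Whitney inf-convolution extension $\overline{g}(v)=\inf_{a\in U}\{g(a)+d_G(v,a)\}$ and verify directly, via the triangle inequality, that it agrees with $g$ on $U$ and is globally 1-Lipschitz. The paper instead proceeds exactly by the one-vertex-at-a-time induction you mention in your closing remark: it picks $z\in V(G)\setminus U$, forms the interval $A=\bigcap_{a\in U}[\overline{g}(a)-d_G(a,z),\,\overline{g}(a)+d_G(a,z)]$, shows $A\neq\emptyset$ by contradiction (if $A$ were empty, two violating constraints would force $g(b)-g(a)>d_G(a,z)+d_G(b,z)\ge d_G(a,b)$, contradicting the Lipschitz hypothesis), assigns $\overline{g}(z)$ any value in $A$, and iterates over the remaining vertices. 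The underlying inequality is identical in both arguments; the difference is packaging. Your formula buys a one-shot, induction-free proof that also identifies a canonical choice (it is the largest 1-Lipschitz extension) and generalizes verbatim to arbitrary metric spaces; the only point of care is that attainment and finiteness of the infimum use finiteness of $U$ (in the infinite locally finite case one would replace attainment by an $\varepsilon$-argument and note the infimum is bounded below by $g(a_0)-d_G(a_0,v)$ for any fixed $a_0\in U$, again by the Lipschitz hypothesis). The paper's induction is more elementary and displays the full interval of admissible values at each new vertex, but as written it relies on exhausting $V(G)\setminus U$ one vertex at a time, which is immediate only for finite graphs --- the setting in which the paper states the lemma.
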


\begin{proof}
Define $\overline{g}:=g$ on $U$. Let $z\in V(G)\backslash U$ be any point. Note that result follows by induction if we can construct a function $\overline g: U\bigcup \{z\} \rightarrow\mathbb R$ which is satisfies $|g(a)-g(b)|\le d_{G}(a,b)$, 
for $a, b\in U\bigcup \{z\}$. To this end, let
$$A:=\bigcap_{a\in U}[\overline{g}(a)-d_{G}(a,z),\overline{g}(a)+d_{G}(a,z)].$$
Observe that if $A$ is empty, then there must exist $a,b\in U$ such that $\overline{g}(a)+d_{G}(a,z)<\overline{g}(b)-d_{G}(b,z)$. This implies that $$g(b)-g(a)>d_{G}(a,z)+d_{G}(b,z)\ge d_{G}(a,b).$$ This contradicts the assumption that $g$ is Lipschitz on $U$, and proves that $A$ is non-empty. Therefore, we can define $\overline g(z)=r$, for some $r\in A$. Moreover, by construction $|\overline g(a)-\overline g(b)|\leq d_{G}(a, b)\leq d_{G}(a, b)$, for any two vertices $a, b \in U\bigcup\{z\}$. By repeating this constructing inductively for every $z \in V(G)\backslash U$, the result follows.
\end{proof}

Next, we show that computing the transportation distance is a linear programming problem, with integral extreme points. To prove this we use the following result from linear programming (Theorem 2.2, Chapter 4, Yemelichev et al. \cite{yemelichev}): The polyhedron $\sP=\{\vec w: \vec b_1\leq M\vec w\leq \vec b_2, \vec d_1\leq \vec w\leq \vec d_2\}$ has integral extreme points, whenever $\vec b_1, \vec b_2, \vec d_1, \vec d_2$ are integral vectors and $M$ is {\it totally unimodular}, that is, the determinant of every sub-matrix of $M$ is 0, +1, or -1. 

\begin{lem}\label{int}
Let $G=(V(G), E(G))$ be a locally finite weighted graph. For $(x, y)\in E(G)$,  there exists $g\in \sL_1$ such that $g:V(G)\mapsto \mathbb{Z}$, $g(x)= 0$, and $g=\arg \sup_{f \in \sL_1}\{E_x(f)-E_y(f)\}$. Thus, while computing $\rc$ it suffices to optimize over integer valued 1-Lipschitz functions, and consequently $\rc$ is rational.
\end{lem}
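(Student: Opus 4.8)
The plan is to recognize the supremum defining $W_1^G(m_x,m_y)$ as a linear program and to show that its constraint matrix is totally unimodular, so that the integrality result of Yemelichev et al. applies directly. First I would set up the LP explicitly. The decision variables are the values $f(v)$ for $v\in V(G)$, and the objective $E_x(f)-E_y(f)=\frac{1}{d_x}\sum_{z\in N_G(x)}f(z)-\frac{1}{d_y}\sum_{z\in N_G(y)}f(z)$ is linear in these variables. The Lipschitz constraints $|f(a)-f(b)|\le d_G(a,b)$ for all pairs $a,b$ become the pair of inequalities $-d_G(a,b)\le f(a)-f(b)\le d_G(a,b)$. Since $G$ is finite, there are finitely many such constraints, and they all have the form (difference of two coordinates) bounded above and below by an integer (the graph distance is an integer). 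I would also normalize by fixing $f(x)=0$; this is harmless because the objective and all constraints depend only on differences $f(a)-f(b)$, so translating $f$ by a constant changes neither feasibility nor the objective, and fixing one coordinate to the integer $0$ keeps the feasible region bounded (hence the supremum is attained at an extreme point).

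The key step is to cast this feasible region in the form $\sP=\{\vec w:\vec b_1\le M\vec w\le \vec b_2,\ \vec d_1\le\vec w\le\vec d_2\}$ required by the cited theorem, with $M$ totally unimodular and all bound vectors integral. Here $\vec w$ is the vector of values $f(v)$; the matrix $M$ is the incidence-type matrix whose rows are indexed by ordered pairs $(a,b)$ and which has a single $+1$ in the column for $a$ and a single $-1$ in the column for $b$ (the remaining entries zero), so that $M\vec w$ has entries $f(a)-f(b)$. The bounds are $\vec b_1,\vec b_2$ with entries $\mp d_G(a,b)$, which are integral since $d_G$ takes integer values. The box constraints $\vec d_1\le\vec w\le\vec d_2$ can be taken as the coordinate bounds arising from the normalization $f(x)=0$ together with the (integral) a priori bounds $|f(v)|\le d_G(x,v)$ that follow from the Lipschitz condition relative to $x$; these make $\sP$ a bounded polytope with integral bound vectors.

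The main obstacle, and the substantive point, is establishing that $M$ is totally unimodular. I would argue this using the standard characterization of network (incidence) matrices: $M$ is precisely the vertex-arc incidence matrix of a directed graph (one arc per ordered pair $(a,b)$), every column of which has exactly one $+1$ and one $-1$. Such matrices are classically totally unimodular, which can be proved either by citing the known result for incidence matrices of directed graphs, or directly by induction on the size of a square submatrix: any such submatrix either has a column with at most one nonzero entry (expand the determinant along it, reducing to a smaller submatrix of the same type) or has every column containing exactly one $+1$ and one $-1$, in which case the rows sum to the zero vector and the determinant is $0$. This dichotomy yields determinant values in $\{0,+1,-1\}$, establishing total unimodularity.

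Once total unimodularity is in place, the theorem of Yemelichev et al. guarantees that every extreme point of $\sP$ is integral. Since $\sP$ is a bounded polytope and the objective is linear, the supremum in \eqref{rcdef} is attained at an extreme point, which is therefore an integer-valued function $g$; by the normalization this $g$ satisfies $g(x)=0$ and $g\in\sL_1$, giving the first assertion. For the final conclusion, the optimal value $E_x(g)-E_y(g)$ is then a rational number, being a fixed rational combination (with denominators $d_x$ and $d_y$) of the integer values $g(z)$; hence $W_1^G(m_x,m_y)$ is rational and so is $\rc=1-W_1^G(m_x,m_y)$, completing the proof.
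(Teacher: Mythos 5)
Your proposal is correct and follows essentially the same route as the paper: cast the supremum as a linear program in the values $f(v)$, observe that the constraint matrix is a directed-graph incidence matrix and hence totally unimodular, and conclude integrality of the extreme points via the cited theorem of Yemelichev et al. The only differences are cosmetic: you retain the full set of pairwise constraints $|f(a)-f(b)|\le d_G(a,b)$ (so your $M$ is the incidence matrix of the complete digraph, with integral bounds $\pm d_G(a,b)$), whereas the paper first reduces to the edge constraints $|f(a)-f(b)|\le 1$ and uses the incidence matrix of the doubled digraph $D(G)$; also, your inductive proof of total unimodularity tacitly swaps the roles of rows and columns of your $M$ (your rows, not columns, carry the single $+1$ and $-1$), which is harmless since total unimodularity is preserved under transposition.
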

\begin{proof}
For $(x, y)\in E(G)$ and $f \in \sL_1$, denote by $T_{xy}(f)=E_x(f)-E_y(f)=\frac{1}{d_x}\sum_{z\in N(x)}f(z)w_{xz}-\frac{1}{d_y}\sum_{z\in N(y)}f(z)w_{yz}$. Note that $T_{xy}$ is location invariant, that is, $T_{xy}(f)=T_{xy}(f+c)$, for any $c\in \R$. Therefore, w.l.o.g. generality we can assume $g(x)=0$. Thus, computing the transportation distance is equivalent to 
$$\max_f T_{xy}(f) \text{ subject to } |f(a)-f(b)|\le 1 \text{ for }(a,b)\in E(G), \text{ and } f(x)=0.$$
This is clearly a linear programming problem with $|V(G)|-1$ variables and $2|E(G)|$ constraints.

The set of all feasible functions of this linear program forms a polytope in $\R^{|V(G)|-1}$ with finitely many extreme points. Consider the digraph $D(G)$ obtained by duplicating every edge of $G$ and orienting one each in both directions. As each of the constraints of the linear program are of the form $|f(a)-f(b)|\le 1$ for $(a,b)\in E(G)$, the set of constraints can be written as $M'\vec f\leq \vec 1$ and $|\vec f|\leq \diam(G)\cdot\vec 1$, where $M$ is the incidence matrix of the digraph $D(G)$, $\vec f$ is the vector of values of $f$ with $f(x)=0$, and $\diam(G)$ is the diameter of the graph $G$. As $M$ is totally unimodular (Theorem 13.9, Schrijver \cite{schrijver}), the set of points of this linear program are integral, completing the proof of the lemma.
\end{proof}

\subsection{Reduction Lemma: Removing Large Cycles}

The transportation distance, and hence the Ricci curvature, of an edge $(x, y)\in E(G)$, is a local property depending only on vertices which are close to $x$ and $y$. In this section we prove a reduction lemma where we make the above statement precise by exactly identifying the sub-graph of $G$ which contributes to Ricci curvature of an edge $(x, y)\in E(G)$. 

Before we proceed to state the lemma, we introduce some notation which will be used throughout the paper. Consider a locally finite weighted graph $G=(V(G), E(G))$. For any two vertices $x, y \in V(G)$, such that $(x, y)\in E(G)$, we associate the following quantities (refer to Figure \ref{fig:neighborhood}(a)):
\begin{description}
\item[$\Delta_G(x, y)$] This is set of vertices in $V(G)$ which are common neighbors of both $x$ and $y$, that is, $\Delta_G(x, y)=N_G(x)\bigcap N_G(y)$. In fact, $|\Delta_G(x, y)|$ is the number of triangles in $G$ supported on $(x, y)$
\item[$P_G(x, y)$] This is the set of vertices which are at distance 2 from both $x$ and $y$. That is, $P_G(x, y)=\{v\in V(G): d_G(x, v)=d_G(y, v)=2\}$. 
\end{description}

Define $V_{(x, y)}:=N_G(x)\bigcup N_G(y)\bigcup P_G(x, y)$, and denote by $H$ the subgraph of $G$ induced by $V_{(x, y)}$.  

\begin{lem}[Reduction Lemma]
\label{truncate}
For a locally finite weighted graph $G=(V(G), E(G))$ and an edge $(x, y)\in E(G)$, $\kappa_G(x, y)=\kappa_{H}(x, y)$. Moreover, for computing $\rc$ it suffices to assume that there are no edges between $\Delta_G(x, y)$ and 
$P_G(x, y)$.
\end{lem}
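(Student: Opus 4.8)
The plan is to work entirely with the primal (optimal-transport) formulation of the transportation distance in Equation (\ref{rcdef2}) and to observe that $W_1^G(m_x, m_y)$, and hence $\kappa_G(x, y)$, is completely determined by the $d_x \times d_y$ matrix of pairwise distances $\left(d_G(z_1, z_2)\right)_{z_1 \in N_G(x),\, z_2 \in N_G(y)}$. Indeed, the feasible set $\mathcal{A}$ in (\ref{rcdef2}) depends only on the degrees $d_x, d_y$, and these are identical in $G$, in the induced subgraph $H$, and in the graph $H'$ obtained from $H$ by deleting all edges between $\Delta_G(x, y)$ and $P_G(x, y)$, since none of these operations changes $N_G(x)$, $N_G(y)$, or the measures $m_x, m_y$. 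It therefore suffices to prove that this distance matrix is preserved when passing from $G$ to $H$, and then from $H$ to $H'$.

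First I would record the elementary bound that $d_G(z_1, z_2) \le 3$ for every $z_1 \in N_G(x)$ and $z_2 \in N_G(y)$, via the walk $z_1 - x - y - z_2$, so that all relevant distances lie in $\{0, 1, 2, 3\}$. As $H$ and $H'$ are subgraphs of $G$, distances can only increase, so in each reduction it remains to exhibit, inside the smaller graph, a walk from $z_1$ to $z_2$ of length $d_G(z_1, z_2)$. For the step $G \to H$ the cases $d_G(z_1, z_2) \in \{0, 1\}$ are immediate (the relevant vertices and edges are induced), and for $d_G(z_1, z_2) = 3$ the canonical path $z_1 - x - y - z_2$ lies in $H$. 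The crux is $d_G(z_1, z_2) = 2$: any common neighbor $w$ of $z_1, z_2$ satisfies $d_G(x, w) \le 2$ and $d_G(y, w) \le 2$, and a short case analysis on these two values shows $w \in N_G(x) \cup N_G(y) \cup P_G(x, y) = V_{(x, y)}$, so the length-$2$ path survives in $H$. This gives $d_H = d_G$ on the relevant pairs, and hence $\kappa_G(x, y) = \kappa_H(x, y)$.

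For the second assertion I would show that deleting the edges between $\Delta_G(x, y)$ and $P_G(x, y)$ again leaves every $d_H(z_1, z_2)$ unchanged. Since neither $z_1 \in N_G(x)$ nor $z_2 \in N_G(y)$ can lie in $P_G(x, y)$, an edge on a shortest path from $z_1$ to $z_2$ can be a deleted $\Delta$-$P$ edge only if that path has length $2$, passes through some $w \in P_G(x, y)$, and has an endpoint $z_i \in \Delta_G(x, y)$. But every vertex of $\Delta_G(x, y)$ is adjacent to both $x$ and $y$, so if $z_1 \in \Delta_G(x, y)$ the walk $z_1 - y - z_2$ (and symmetrically $z_1 - x - z_2$ when $z_2 \in \Delta_G(x, y)$) is a length-$2$ walk avoiding $P_G(x, y)$ entirely, hence using no deleted edge. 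The paths realizing distances $0, 1, 3$ never meet $P_G(x, y)$ either, so all distances, and therefore $\kappa(x, y)$, are unchanged.

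The main obstacle is the distance-$2$ analysis, which appears in both reductions and relies on the careful bookkeeping that the vertices of $P_G(x, y)$ are exactly those at distance $2$ from both endpoints; in particular $N_G(x)$ and $N_G(y)$ are disjoint from $P_G(x, y)$, and this disjointness is precisely what allows every problematic shortest path to be rerouted through $x$ or $y$. I expect no further difficulty, since once $d_G = d_H = d_{H'}$ is established on $N_G(x) \times N_G(y)$, the equality of the transportation distances is immediate from the fact that the linear program (\ref{rcdef2}) has identical data in all three graphs.
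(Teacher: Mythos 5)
Your proof is correct, and on the first assertion it takes a genuinely different route from the paper's. The paper handles the two claims with two different tools: for $\kappa_G(x,y)=\kappa_H(x,y)$ it works with the dual formulation (\ref{rcdef}), getting one inequality from the monotonicity of distances under passage to a subgraph, and the other by restricting a $d_H$-Lipschitz function to $N_G(x)\cup N_G(y)$ and invoking the extension Lemma \ref{lip} to produce a $d_G$-Lipschitz function on all of $V(G)$; only for the deletion of the edges between $\Delta_G(x,y)$ and $P_G(x,y)$ does it switch to the primal problem (\ref{rcdef2}) and a shortest-path case analysis. You instead run both reductions through the primal problem, using the single observation that $W_1$ is determined by $d_x$, $d_y$ and the distance matrix on $N_G(x)\times N_G(y)$, and then checking that this matrix survives both operations. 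This makes the argument uniform, dispenses with Lemma \ref{lip} entirely, and in fact supplies a detail the paper leaves implicit: the paper asserts without proof that vertices of $N_G(x)\cup N_G(y)$ realize their $G$-distances inside $H$, and your case analysis (a common neighbor $w$ of $z_1$ and $z_2$ satisfies $d_G(x,w)\le 2$ and $d_G(y,w)\le 2$, hence lies in $V_{(x,y)}$) is precisely the justification needed there. What the paper's route keeps, in exchange, is the Lipschitz-function viewpoint that the rest of the paper computes with (Theorems \ref{th:bipartite} and \ref{th:gfive} build explicit Lipschitz functions on the core neighborhood), together with a reusable extension lemma; your version reaches the same conclusion with less machinery.
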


\begin{figure*}[h]
\centering
\begin{minipage}[c]{0.5\textwidth}
\centering
\includegraphics[width=3.0in]
    {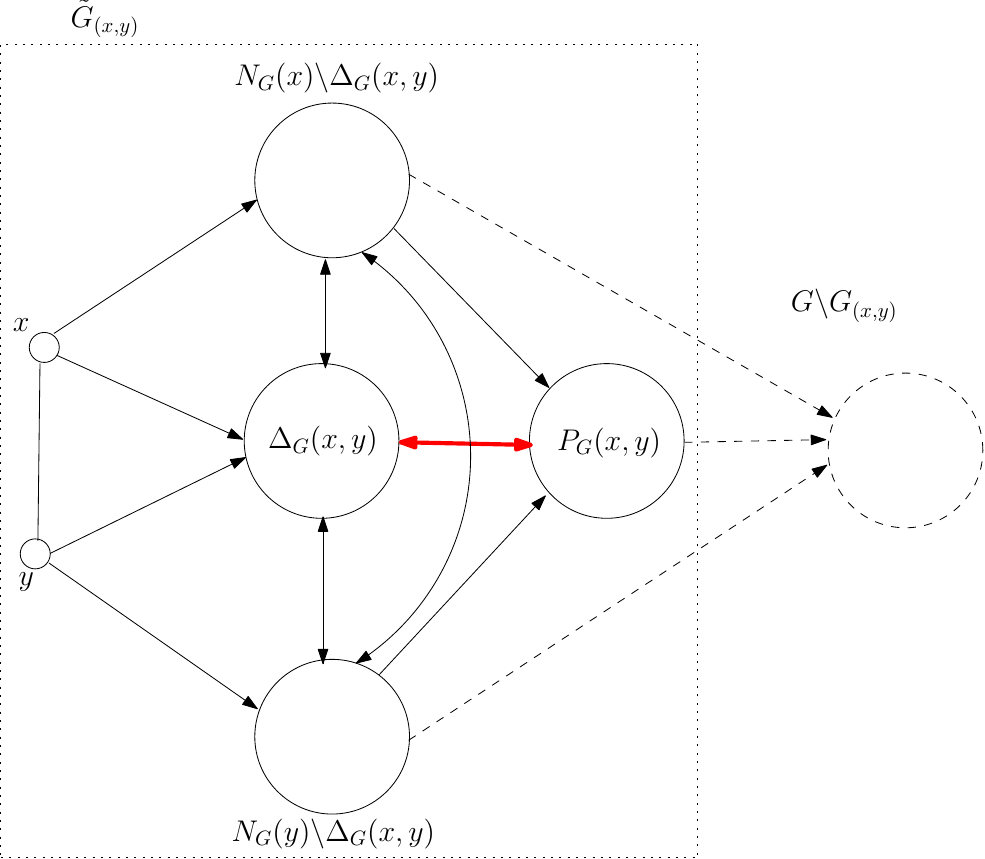}\\
\end{minipage}%
\caption{Core neighborhood of $(x, y)$: Illustration for the proof of the reduction lemma.}
\label{fig:neighborhood}
\end{figure*}

\begin{proof}
To begin with observe that $d_G(a, b)\leq d_{H}(a, b)$, for any $a, b\in V(H)$. Therefore, any function which is Lipschitz in $d_G$ is also Lipschitz in $d_{H}$. Therefore, $$W_1^{H}(m_x, m_y)\ge W_1^G(m_x, m_y).$$

To show the other inequality it suffices to show that for any 1-Lipschitz function $f:V(H)\mapsto \mathbb R$ with respect to $d_{H}$, we can define a function $g: V(G)\mapsto \mathbb R$ which is 1-Lipschitz with respect to $d_G$  and agrees with $f$ on $N_G(x, y):=N_G(x)\bigcup N_G(y)$, as the transportation distance between $m_x$ and $m_y$ only depends on the values of the function at $N_G(x, y)$. To this end, define $g=f$ on $N_G(x, y)$. Observe that if $a,b\in N_G(x, y)$, then by construction of $H$ there is a path from $a$ to $b$ of length $d_G(a,b)$ in $H$, and so $d_G(a,b)=d_{H}(a,b)$. Moreover, as $f$ is Lipschitz with respect to $d_{H}$, we have $|g(a)-g(b)|=|f(a)-f(b)|\le d_{H}(a,b)=d_G(a,b)$. Finally, applying Lemma \ref{lip} with $U=N_G(x, y)$ proves that $g$ can be extended to a Lipschitz function with respect to $d_G$ on the whole of $V(G)$. This proves that $W_1^{H}(m_x, m_y)\le W_1^G(m_x, m_y)$, and the proof of the first part of the lemma is complete.

To show the second part, it suffices to show that $\kappa_{H}(x, y)=\kappa_{H\backslash e}(x, y)$, for a edge $e$ between $\Delta_G(x, y)$ and $P_G(x, y)$. By the dual definition (\ref{rcdef2}) this means that the optimal transfer matrix $\mathcal A$ remains unchanged if we drop the edge $e$. This is equivalent to showing that for any 
$z_1\in N_G(x)$ and $z_2\in N_G(y)$ there is a shortest path connecting $z_1, z_2$ without using $e$. The following cases may arise:
\begin{description}
\item[$d_{H}(z_1,z_2)=3$] The shortest path not using $e$ in this case is $(z_1,x,y,z_2)$.

\item[$d_{H}(z_1,z_2)=2$]If $z_1\in N_G(x)\bigcap N_G(y)$ then a path of length $2$ not containing $e$ is 
$(z_1,y,z_2)$. Similarly, for $z_2\in N_G(x)\bigcap N_G(y)$ a path of length $2$ is $(z_1,x,z_2)$. Therefore, consider $z_1\in N_G(x)\backslash N_G(y)$ and  $z_2\in N_G(y)\backslash N_G(x)$. 
However, it is easy to see that as $e$ is an edge between $\Delta_G(x, y)$ and $P_G(x, y)$, a path of length $2$ containing $e$ cannot connect $z_1$ and $z_2$. Therefore, if $d_{H}(z_1,z_2)=2$, then there exists a path of length 2 not containing $e$.

\item[$d_{H}(z_1,z_2)=1$] The shortest path in this case is $(z_1,z_2)$ and $e\neq (z_1,z_2)$, as $d_{H}(z_1,x)=1, d_{H}(z_2,y)=1$. 
\end{description}
\end{proof}

If $\varphi_G(x, y)$ denotes the set of edges between $\Delta_G(x, y)$ and $P_G(x, y)$ (red edges in Figure \ref{fig:neighborhood}), we denote the {\it core neighborhood} of $(x, y)$ in $G$ as the subgraph 
$$G_{(x,y)}:=(V(H), E(H)\backslash \varphi_G(x, y)).$$
The above lemma shows that it suffices to consider only the core neighborhood subgraph $ G_{(x,y)}$ for computing the Ricci curvature of $(x, y)\in E(G)$, which greatly simplifies computations of $\rc$.


One of the few known exact formulas for Ricci curvature for graphs is the following result of Jost and Liu \cite{jost} for trees:

\begin{thm}[Jost and Liu \cite{jost}] 
For any neighboring vertices $x, y$ of a unweighted tree $T$, $$\kappa(x, y) = -2\Big(1-\frac{1}{d_x}-\frac{1}{d_y}\Big)_+.$$
\label{th:treejost}
\end{thm}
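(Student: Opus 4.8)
The plan is to compute $W_1^G(m_x,m_y)$ directly for a tree and show it equals $1+2\left(1-\frac{1}{d_x}-\frac{1}{d_y}\right)_+$, so that $\kappa(x,y)=1-W_1^G(m_x,m_y)$ yields the claimed formula. The key structural simplification is that in a tree there are no triangles, 4-cycles, or 5-cycles, so $\Delta_G(x,y)=\emptyset$ and $P_G(x,y)=\emptyset$. By the Reduction Lemma (Lemma \ref{truncate}), the core neighborhood $G_{(x,y)}$ is just the subgraph induced by $N_G(x)\cup N_G(y)$, which for a tree is a ``double star'': the edge $(x,y)$ together with the $d_x-1$ leaves hanging off $x$ (the neighbors of $x$ other than $y$) and the $d_y-1$ leaves hanging off $y$. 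All distances between relevant vertices are then easy to read off: any neighbor $z_1$ of $x$ (with $z_1\neq y$) and any neighbor $z_2$ of $y$ (with $z_2\neq x$) satisfy $d(z_1,z_2)=3$ when both are leaves, while $d(x,z_2)=d(y,z_1)=2$ and $d(x,y)=1$.

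I would then set up the transportation problem from $m_x$ to $m_y$ explicitly. The measure $m_x$ places mass $1/d_x$ on each of the $d_x$ neighbors of $x$ (including $y$), and likewise $m_y$ places mass $1/d_y$ on each neighbor of $y$ (including $x$). The natural transport plan is as follows: the mass $m_x$ assigns to $y$ itself can be moved cheaply, and the mass $m_y$ assigns to $x$ similarly, since $x$ and $y$ are at distance $1$ from their own leaves. Concretely one tries to first cancel as much mass as possible at distance $1$ (moving mass sitting at $y$ under $m_x$ to the neighbors of $y$, and mass at $x$ under $m_y$ to neighbors of $x$), and transport the remaining ``leaf-to-leaf'' mass at cost $3$. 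Counting carefully, the amount of mass that must travel the full distance $3$ is governed by the overlap $1-\frac{1}{d_x}-\frac{1}{d_y}$, which is exactly where the positive-part term and the factor of $2$ arise.

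Rather than guess the optimal $\nu$ and separately verify optimality, the cleaner route is to invoke Lemma \ref{int}: it suffices to optimize the dual over \emph{integer-valued} $1$-Lipschitz functions $f$ with $f(x)=0$. On the double star the integer $1$-Lipschitz constraint is very rigid—neighbors of $x$ can take values in $\{-1,0,1\}$ and must be within $1$ of each other along edges—so the dual objective $E_x(f)-E_y(f)=\frac{1}{d_x}\sum_{z\in N(x)}f(z)-\frac{1}{d_y}\sum_{z\in N(y)}f(z)$ can be maximized by inspection over the finitely many admissible assignments. One sets the leaves of $x$ as high as possible and the leaves of $y$ as low as possible subject to the Lipschitz constraint linking them through the path $z_1\!-\!x\!-\!y\!-\!z_2$, and checks that the optimizer depends on the sign of $1-\frac{1}{d_x}-\frac{1}{d_y}$. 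Matching the resulting dual value against the primal transport cost confirms $W_1^G(m_x,m_y)=1+2\left(1-\frac{1}{d_x}-\frac{1}{d_y}\right)_+$.

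The main obstacle I anticipate is the bookkeeping in the boundary regime where $1-\frac{1}{d_x}-\frac{1}{d_y}\le 0$ (for instance when $x$ or $y$ has degree close to $1$, as with pendant vertices): here the positive part truncates to $0$, reflecting that all the mass can be transported at distance at most $1$ and no leaf-to-leaf transport at cost $3$ is forced. I would handle this by treating the two cases—whether $\frac{1}{d_x}+\frac{1}{d_y}\ge 1$ or not—separately, verifying in each that the primal plan and dual certificate agree. Getting the exact fractional masses and the $(\cdot)_+$ truncation to match on both sides is the delicate step; everything else is determined by the triviality of the tree geometry.
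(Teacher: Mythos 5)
Your proposal is correct, and it is essentially the paper's own method: the paper never actually proves Theorem~\ref{th:treejost} itself (it is quoted from Jost and Liu), but your argument---reduce to the double star via Lemma~\ref{truncate}, pass to integer-valued $1$-Lipschitz functions with $f(x)=0$ via Lemma~\ref{int}, and split on the sign of $1-\frac{1}{d_x}-\frac{1}{d_y}$---is precisely the specialization to trees of the paper's proofs of Theorems~\ref{th:bipartite} and~\ref{th:gfive}, where the case split is organized instead over $f(y)\in\{-1,0,1\}$, giving $\kappa_{-1}=0$ and $\kappa=\kappa_1\wedge 0=-2\bigl(1-\frac{1}{d_x}-\frac{1}{d_y}\bigr)_+$ once the correction terms vanish. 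Your quantitative claims also check out: the dual value $W_1=1+2\bigl(1-\frac{1}{d_x}-\frac{1}{d_y}\bigr)_+$ is attained by setting $f=1$ on the leaves of $x$, $f(y)=-1$, $f=-2$ on the leaves of $y$ (or $f(y)=1$ in the truncated regime), and is matched by the primal plan you describe, which routes mass $\frac{1}{d_y}$ from leaves of $x$ into $x$ at cost $1$, mass $\frac{1}{d_x}$ from $y$ to leaves of $y$ at cost $1$, and the excess $\bigl(1-\frac{1}{d_x}-\frac{1}{d_y}\bigr)_+$ leaf-to-leaf at cost $3$.
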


An immediate consequence of Lemma \ref{truncate} and the above theorem is the following corollary, which generalizes the formula for Ricci curvature of trees to graphs with girth $\geq 6$. This generalization was clear from the proof of Theorem \ref{th:treejost} in Jost and Lin \cite{jost}, and was also proved by Cho and Paeng \cite{chopaeng}.

\begin{cor}[\cite{chopaeng,jost}]
\label{gsix} 
For a locally finite unweighted graph $G=(V(G), E(G))$ and an edge $(x, y)\in E(G)$, 
$$\rc=-2\Big(1-\frac{1}{d_x}-\frac{1}{d_y}\Big)_+,$$ 
if there are no $3,4,\text{ and }5$ cycles supported on $(x,y)$. In particular, the above formula holds whenever the girth of 
$G$ is at least 6.
\end{cor}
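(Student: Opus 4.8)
The plan is to combine the Reduction Lemma (Lemma \ref{truncate}) with the tree formula of Jost and Liu (Theorem \ref{th:treejost}) by showing that, under the hypothesis that there are no $3$, $4$, or $5$ cycles supported on $(x,y)$, the core neighborhood subgraph $G_{(x,y)}$ is in fact a tree. Once this structural fact is established, Lemma \ref{truncate} gives $\kappa_G(x,y)=\kappa_{G_{(x,y)}}(x,y)$, and Theorem \ref{th:treejost} applied to the tree $G_{(x,y)}$ yields the claimed formula directly, since the degrees of $x$ and $y$ are preserved in passing to the core neighborhood.

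First I would analyze the combinatorial structure of $V_{(x,y)} = N_G(x)\cup N_G(y)\cup P_G(x,y)$ under the cycle hypothesis. The absence of triangles on $(x,y)$ means $\Delta_G(x,y)=N_G(x)\cap N_G(y)=\emptyset$, so $N_G(x)$ and $N_G(y)$ are disjoint (apart from the edge $(x,y)$ itself). The absence of $4$-cycles on $(x,y)$ forces that no two distinct neighbors of $x$ share a neighbor with $y$ other than through $x$ or $y$, and rules out edges within $N_G(x)\setminus\{y\}$ or within $N_G(y)\setminus\{x\}$ that would close a short cycle. The absence of $5$-cycles on $(x,y)$ controls the vertices in $P_G(x,y)$: since each vertex of $P_G(x,y)$ lies on a $5$-cycle supported on $(x,y)$, the hypothesis actually means $P_G(x,y)=\emptyset$. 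Thus $V_{(x,y)}$ reduces to $N_G(x)\cup N_G(y)$, and I would check that the induced edges are exactly the edge $(x,y)$ together with the pendant edges joining each neighbor to $x$ or to $y$, giving precisely a tree (a ``double star'' rooted at the edge $xy$).

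The key verification is that no additional edges survive among the vertices of $N_G(x)\cup N_G(y)$: an edge between two neighbors of $x$ would create a triangle (a $3$-cycle on some edge, and in particular a short cycle incompatible with the hypothesis when combined with $x$), an edge between a neighbor of $x$ and a neighbor of $y$ distinct from the obvious ones would create a $4$-cycle on $(x,y)$, and so on. I would enumerate these cases carefully to confirm that $G_{(x,y)}$ contains no cycle at all, hence is a forest; connectivity through the edge $xy$ makes the relevant component a tree containing $x$ and $y$ with $d_x$ and $d_y$ preserved.

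The main obstacle I anticipate is the bookkeeping of exactly which edges in the induced subgraph $H$ and its reduction $G_{(x,y)}$ need to be excluded, and verifying that the vertex $y\in N_G(x)$ and $x\in N_G(y)$ are handled consistently so that the degrees entering the formula are genuinely $d_x$ and $d_y$. In particular I must confirm that removing the edges in $\varphi_G(x,y)$ (between $\Delta_G$ and $P_G$) is vacuous here since both sets are empty, so that $G_{(x,y)}=H$ and no curvature-relevant edge is lost. Once the tree structure is confirmed, the conclusion follows mechanically; the final sentence about girth at least $6$ is immediate, since $g(G)\ge 6$ precludes any cycle of length $3$, $4$, or $5$ anywhere, in particular one supported on $(x,y)$.
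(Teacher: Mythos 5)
Your strategy (Lemma \ref{truncate} combined with Theorem \ref{th:treejost}) is exactly the paper's, but the structural claim on which your execution rests is false: under the stated hypothesis the core neighborhood $G_{(x,y)}$ need \emph{not} be a tree. ``Supported on $(x,y)$'' means the cycle contains the edge $(x,y)$. The hypothesis therefore does not rule out an edge between two vertices $a,a'\in N_G(x)\setminus\{y\}$: the resulting triangle $x a a'$ contains $x$ but not the edge $(x,y)$, so it is not supported on $(x,y)$. Concretely, take $V=\{x,y,a,a',b\}$ with $x$ adjacent to $y,a,a'$, with $y$ adjacent to $b$, and with $a$ adjacent to $a'$. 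No cycle at all passes through the edge $(x,y)$, so the hypothesis holds, yet $G_{(x,y)}$ contains the triangle $x a a'$ and is not a forest, so Theorem \ref{th:treejost} cannot be invoked as a black box. Your sentence ``an edge between two neighbors of $x$ would create a triangle \ldots\ incompatible with the hypothesis when combined with $x$'' is precisely the false step: it silently upgrades ``no short cycles supported on $(x,y)$'' to ``no short cycles near $x$ or $y$.'' Your treatment of $\Delta_G(x,y)$, of $P_G(x,y)$, and of edges joining $N_G(x)\setminus\{y\}$ to $N_G(y)\setminus\{x\}$ is correct, and your argument is complete for the ``girth at least $6$'' assertion (where all triangles are banned everywhere); the gap concerns the general hypothesis, which is strictly weaker.

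The missing piece is an argument that edges inside $N_G(x)\setminus\{y\}$ and inside $N_G(y)\setminus\{x\}$ are irrelevant to the curvature; this is true and salvages the proof. By the coupling formulation (\ref{rcdef2}), $W_1(m_x,m_y)$ depends only on the distances $d(z_1,z_2)$ with $z_1\in N_G(x)$, $z_2\in N_G(y)$, and under the hypothesis these are forced to their double-star values: $d(z_1,z_2)=1$ when $z_1=y$ or $z_2=x$, while $d(a,b)=3$ for $a\in N_G(x)\setminus\{y\}$ and $b\in N_G(y)\setminus\{x\}$, because an edge $(a,b)$ would create a $4$-cycle through $(x,y)$, and a common neighbor of $a$ and $b$ would create a triangle, $4$-cycle, or $5$-cycle through $(x,y)$, all excluded; the path $a\!-\!x\!-\!y\!-\!b$ then realizes distance $3$. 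Hence the transportation problem for $G_{(x,y)}$ is identical to that of the double star $T$ obtained by deleting all intra-neighborhood edges, so $\kappa_G(x,y)=\kappa_T(x,y)$ and Theorem \ref{th:treejost} applies to $T$. Equivalently, you could prove a further reduction step in the spirit of the second half of Lemma \ref{truncate}: deleting an edge inside $N_G(x)\setminus\{y\}$ changes no distance between the two neighborhoods, since any shortest path between them can be rerouted through $x$ and $y$. With that one additional step your proof closes; without it, the assertion ``hence $G_{(x,y)}$ is a tree'' is simply unavailable under the stated hypothesis.
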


\section{Bipartite Graphs and Graphs With Girth Greater Than 4}

Jost and Liu \cite{jost} obtained bounds on Ricci curvature involving the number of triangles supported on $(x, y)$, that is $|\Delta_G(x, y)|$. Their main result is stated in Theorem \ref{th:jost}, which reduces to the following when there are no triangle supported on $(x, y)$:
\begin{equation}
-2\left(1-\frac{1}{d_x}-\frac{1}{d_y}\right)_+\le \rc \le 0.
\label{eq:kappajosttrianglefree}
\end{equation}
It is also known that the lower bound is tight for the graphs with girth at least 6 \cite{chopaeng,jost}. Cho and Paeng \cite{chopaeng} also obtained bounds on the Ricci curvature of graphs with girth at least 5 in terms of the minimum degree of the graph. In this section, we obtain general upper bounds for the Ricci curvature in bipartite graphs and graphs with girth greater than 4, in terms of the structure of the core-neighborhood. (The earlier version of the article claimed exact formulas for the Ricci curvature of bipartite graphs and graphs with girth greater than 4. However, as pointed out in the recent paper of Kelly \cite{rc_reduced}, there was a bug in the arguments, which makes the lower bounds invalid. The upper bounds remain valid, and, despite the error, the proof strategy provides useful insights for computing the Ricci curvature of such graphs (cf.~\cite{rc_reduced} for more detials).)

\subsection{Ricci Curvature of Bipartite Graphs}
\label{sec:bipartite}

Recall that $N_G(x)$ and $N_G(y)$ denote the set of neighbors of $x$ and $y$, respectively. We partition $N_G(x)=N_0(x)\bigcup N_1(x)\bigcup\{y\}$, where
\begin{description}
\item[$N_1(x)$]=$\{z\in N_G(x)\backslash\{y\}: d_G(z,N_G(y)=1\}$, is the set of neighbors of $x$ which are on a 4-cycle supported on $(x, y)$.
\item[$N_0(x)$]$=N_G(x)\backslash (N_1(x)\bigcup \{y\})$, is the set of remaining neighbors of $x$, apart from $y$.
\end{description}
Similarly, we can define a partition $N_G(y)=N_0(y)\bigcup N_1(y)\bigcup\{x\}$. Now, if we assume that $G$ is bipartite, $|\Delta_G(x, y)|=0$ and $|P_G(x, y)|=0$.

\begin{figure*}[h]
\centering
\begin{minipage}[c]{1.0\textwidth}
\centering
\includegraphics[width=3.8in]
    {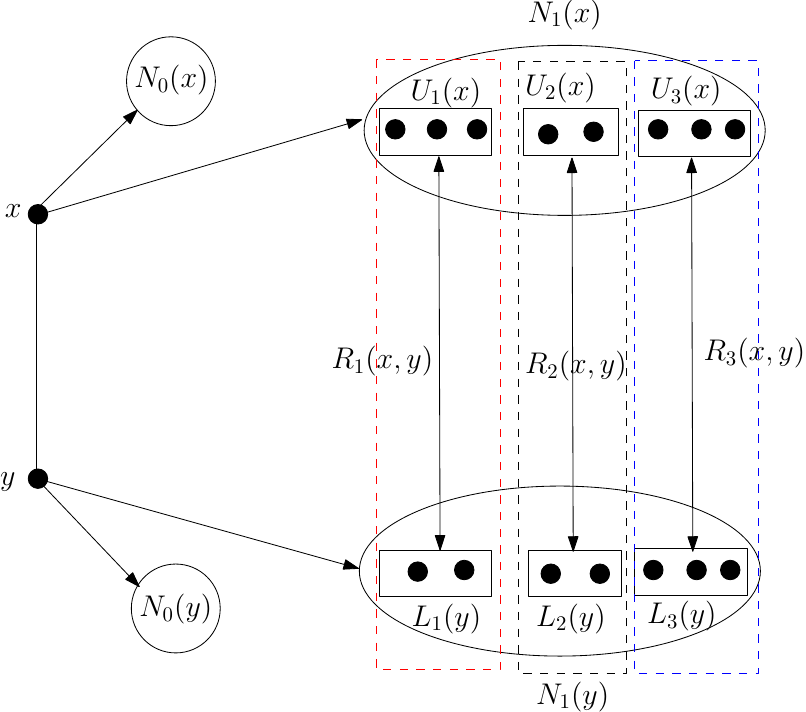}\\
\end{minipage}
\caption{Ricci curvature of bipartite graphs: Structure of the core neighborhood.}
\label{fig:rcbipartite}
\end{figure*}

\begin{thm}
\label{th:bipartite}
Let $G=(V(G), E(G))$ be a locally finite unweighted bipartite graph and $(x, y)\in E(G)$. Suppose
$R(x, y)$ is the subgraph of $G_{(x, y)}$ induced by $N_1(x)\bigcup N_1(y)$, and $R_1(x, y), R_2(x, y),$ $\ldots R_q(x, y)$ be the connected components of $R(x, y)$. If $U_a(x)=V(R_a(x, y))\bigcap N_1(x)$ and $U_a(y)=V(R_a(x, y))\bigcap N_1(y)$, for $a \in \{1, 2, \ldots, q\}$, then  
\begin{equation}
\rc \leq -2\left(1-\frac{1}{d_x}-\frac{1}{d_y}-\frac{|N_1(y)|}{d_y}+\sum_{a=1}^q \left(\frac{|U_a(y)|}{d_y}-\frac{|U_a(x)|}{d_x}\right)_+\right)_{+}.
\label{eq:thbipartite}
\end{equation}
\end{thm}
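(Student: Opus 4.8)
The plan is to combine the Reduction Lemma (Lemma \ref{truncate}) with the integrality of the transportation linear program (Lemma \ref{int}), and then to solve the resulting flow problem by a max-flow--min-cut argument. First I would pass to the core neighborhood $G_{(x,y)}$: since $G$ is bipartite there are no triangles or $5$-cycles supported on $(x,y)$, so $\Delta_G(x,y)=\emptyset$ and $P_G(x,y)=\emptyset$, and $G_{(x,y)}$ is just the bipartite graph induced on $N_G(x)\cup N_G(y)$. Placing $x$ and $y$ on opposite sides, every source $z_1\in N_G(x)$ and sink $z_2\in N_G(y)$ lie on opposite sides, so $d(z_1,z_2)$ is odd and at most $3$ (via the path $z_1-x-y-z_2$); hence every transport cost is either $1$ or $3$. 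Writing $F$ for the maximum amount of mass that can be routed along cost-$1$ pairs, the best feasible plan costs $3-2F$, so $W_1^G(m_x,m_y)=3-2F$ and $\rc=-2(1-F)$. The whole problem thus reduces to computing $F$.

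The cost-$1$ bipartite graph between sources and sinks has a transparent structure: the source $y$ is at distance $1$ from every sink, every source is at distance $1$ from the sink $x$, and a pair $z_1\in N_1(x)$, $z_2\in N_1(y)$ is a cost-$1$ pair precisely when it is an edge of $R(x,y)$; the vertices of $N_0(x)$ and $N_0(y)$ meet only the hubs $x$ and $y$. I would then compute $F$ as a max flow (each source carrying supply $1/d_x$, each sink demand $1/d_y$, and cost-$1$ pairs given infinite capacity) and evaluate it through its min cut. Because any finite cut must contain, together with a chosen set of sources, all sinks joined to them by cost-$1$ pairs, the cut value is either the trivial bound $1$ (attained by keeping the universal source $y$) or $\tfrac1{d_x}+\tfrac1{d_y}+g(S)$, where $S\subseteq N_1(x)$ records the retained $N_1(x)$-sources and $g(S)=\tfrac{|N_1(x)|-|S|}{d_x}+\tfrac{|N_R(S)|}{d_y}$, with $N_R(S)$ the $R(x,y)$-neighborhood of $S$ inside $N_1(y)$. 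Since $R(x,y)$ splits into the components $R_a(x,y)$ and neighborhoods never cross components, $\min_S g(S)$ decomposes as $\sum_{a=1}^q\min_{S_a\subseteq U_a(x)}g_a(S_a)$.

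The heart of the proof is the evaluation of each component minimum. I would aim to show that for a connected bipartite component the optimal choice is all-or-nothing, so that $\min_{S_a}g_a(S_a)=\min\big(|U_a(x)|/d_x,\,|L_a(y)|/d_y\big)$; feeding this into $F=\min\big(1,\ \tfrac1{d_x}+\tfrac1{d_y}+\sum_a\min(|U_a(x)|/d_x,|L_a(y)|/d_y)\big)$ and using the elementary identity $\min(A,B)=B-(B-A)_+$ together with $\sum_a|L_a(y)|/d_y=|N_1(y)|/d_y$ converts $\rc=-2(1-F)$ into the advertised formula, the outer clamp $\min(1,\cdot)$ supplying the $(\cdot)_+$. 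I expect this all-or-nothing step to be the main obstacle: a priori a proper, nonempty $S_a$ whose $R(x,y)$-neighborhood is unexpectedly small could make $g_a$ strictly smaller than either extreme, and ruling this out (or correctly accounting for it) requires a delicate interaction between the component structure of $R(x,y)$, the two hub vertices $x$ and $y$, and the sets $N_0(x),N_0(y)$ that feed the hubs. Establishing cleanly that only entire components $U_a(x)$ are ever switched on or off in an optimal cut is where the real combinatorial work lies, and it is the step I would develop first and most carefully.
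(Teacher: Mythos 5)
Your primal reformulation is correct as far as it goes (the reduction to the core neighborhood, the observation that all source--sink distances lie in $\{1,3\}$, the identity $W_1=3-2F$, and the min-cut expression for $F$ are all sound), but it is worth recognizing that it is not really a different proof from the paper's: it is the exact linear-programming dual of it. The paper works on the Kantorovich side, using Lemma \ref{int} to restrict to integer-valued $1$-Lipschitz $f$ with $f(x)=0$, $f(y)\in\{-1,0,1\}$, and then optimizes $f$ component by component over $R(x,y)$. Your retained set $S\subseteq N_1(x)$ corresponds to the level set $\{f=-1\}\cap N_1(x)$, its neighborhood $N_R(S)$ to the vertices of $N_1(y)$ barred from the value $+1$, and your ``all-or-nothing'' evaluation of each component minimum is, through this dictionary, word for word the paper's assertion (Case 2 of its proof) that ``the bipartite structure of $R_a(x,y)$ immediately implies'' that an optimal $f_a$ is constant on $U_a(x)$ and on $L_a(y)$.

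The step you flagged as the main obstacle is therefore exactly the crux, and the situation is worse than a missing argument: the all-or-nothing claim is false, and your anticipated failure mode (a proper nonempty $S_a$ with an unexpectedly small neighborhood) actually occurs. Take $d_x=d_y=5$ with $N_G(x)=\{y,u_1,u_2,u_3,u_4\}$, $N_G(y)=\{x,l_1,l_2,l_3,l_4\}$, where $u_1$ is adjacent to all of $l_1,\ldots,l_4$ and $u_2,u_3,u_4$ are adjacent to $l_1$ only. This graph is bipartite, $N_0(x)=N_0(y)=\emptyset$, $R(x,y)$ is connected with $|U_1(x)|=|L_1(y)|=4$, so formula (\ref{eq:thbipartite}) gives $\kappa(x,y)=-2\left(1-\frac15-\frac15-\frac45\right)_+=0$. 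But the cut $S=\{u_2,u_3,u_4\}$ has $N_R(S)=\{l_1\}$, so $g(S)=\frac15+\frac15=\frac25$ and $F\le\frac15+\frac15+\frac25=\frac45$: the sources $u_2,u_3,u_4$ carry mass $\frac35$ while their only cost-$1$ partners $x,l_1$ can absorb $\frac25$. Hence $W_1\ge 3-2\cdot\frac45=\frac75$ and $\kappa(x,y)\le-\frac25$. A direct certificate is the function $h$ with $h(x)=-1$, $h(y)=h(u_1)=-2$, $h(u_2)=h(u_3)=h(u_4)=0$, $h(l_1)=-1$, $h(l_2)=h(l_3)=h(l_4)=-3$, which satisfies $|h(a)-h(b)|\le 1$ on every edge and yields $E_x(h)-E_y(h)=-\frac45+\frac{11}{5}=\frac75$; equivalently, on the dual side the mixed assignment ($-1$ on $u_2,u_3,u_4$, $0$ on $u_1,l_1$, $+1$ on $l_2,l_3,l_4$) is $1$-Lipschitz on $R_1(x,y)$ yet is neither of the paper's ``two possibilities.'' So your proposal cannot be completed as written---but the defect is inherited from the theorem itself: the paper's proof asserts the dichotomy without justification, and this example shows both the assertion and Equation (\ref{eq:thbipartite}) fail. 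Carried out honestly, i.e.\ keeping the full deficiency minimum $\min_S g(S)$ instead of its all-or-nothing relaxation, your min-cut approach computes the correct curvature (here $-\frac25$, not $0$), and is thus the right framework for repairing the statement rather than reproving it.
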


\begin{proof}
Using Lemma \ref{truncate} we can replace $G$ by its core neighborhood $G_{(x, y)}$. As $G$ is bipartite, there are no edges between $N_0(x)$ and $N_1(x)$, and between $N_0(y)$ and $N_1(y)$. Therefore, the structure of the core neighborhood $G_{(x, y)}$ is as depicted in Figure \ref{fig:rcbipartite}.

For any $Z\subseteq  V(G_{(x, y)})$ and any function $f \in \sL_1$, denote by $W_f(Z)=\sum_{z\in Z}f(z)$. Therefore, for $(x, y)\in E(G)$,
\begin{equation}
E_y(f)-E_x(f)=\frac{f(x)+W_f(N_0(y))+W_f(N_1(y))}{d_y}-\frac{f(y)+W_f(N_0(x))+W_f(N_1(x))}{d_x}.
\label{eq:wdbipartite}
\end{equation}
Lemma \ref{int} implies that it suffices to maximize $E_y(f)-E_x(f)$ over 1-Lipschitz functions  $f$ satisfying $f(x)=0$, and $f(y)\in \{-1,0,1\}$. Therefore, for $i\in \{-1, 0, 1\}$ define 
$$\kappa_i(x, y):=1-\max_{f\in \sL_1,f(y)=i}(E_y(f)-E_x(f)),$$
and observe that $\rc=\kappa_{-1}(x, y)\wedge\kappa_0(x, y)\wedge\kappa_1(x, y)$. Assuming $f(x)=0$ we consider the following three cases separately. 

\subsection*{{\it Case} 1} $f(y)=-1$. This implies that $f(z)\leq 0$ for $z\in N_G(y)$ and $f(z)\geq -1$, for $z \in N_G(x)$. Therefore, from Equation (\ref{eq:wdbipartite}) we get
\begin{equation}
E_y(f)-E_x(f)\leq\frac{1}{d_x}+\frac{|N_0(x)|+|N_1(x)|}{d_x}=1.\nonumber
\end{equation}
Moreover, this bound is attained by the function $g:V( G_{(x, y)}) \mapsto \mathbb R$:
$$g(z)=\left\{
\begin{array}{cc}
-1, & \hbox{ if } z \in N_G(x),\\
0, & \hbox{otherwise},\\
\end{array}
\right.
$$
which is 1-Lipschitz on the core neighborhood of $(x, y)$ (refer to Figure \ref{fig:rcbipartite}). This implies,
$\kappa_{-1}(x, y)=0$. 

\subsection*{{\it Case} 2} $f(y)=0$. 
Now, consider the 1-Lipschitz function $g: V( G_{(x, y)}) \mapsto \mathbb R$ 
$$g(z)=
\left\{\begin{array}{cc}
-1, & \hbox{ if } z \in N_0(x);\\
1, & \hbox{ if }  z \in N_0(y);\\
g_a(z), & \hbox{ if }  z \in U_a(x)\bigcup U_a(y);\\
0, & \hbox{ otherwise; }\\
\end{array}
\right.
$$
where for $a \in \{1, 2, \ldots, q\}$,  $$g_a(z):=\left\{
\begin{array}{cc}
-1\cdot\pmb{1}\left\{\frac{|U_a(x)|}{d_x} \geq \frac{|U_a(y)|}{d_y}\right\}, & \hbox{ if } z \in U_a(x);\\
1\cdot\pmb{1}\left\{\frac{|U_a(x)|}{d_x} < \frac{|U_a(y)|}{d_y}\right\}, & \hbox{ if } z \in U_a(y).\\
\end{array}
\right.
$$
It is easy to see that $$E_y(g)-E_y(g)=\frac{|N_0(y)|}{d_y}+\frac{|N_0(x)|+|N_1(x)|}{d_x}+\sum_{a=1}^q \left\{\frac{|U_a(y)|}{d_y}-\frac{|U_a(x)|}{d_x}\right\}\cdot\pmb{1}_{\left\{\frac{|U_a(x)|}{d_x} < \frac{|U_a(y)|}{d_y}\right\}}.$$Therefore, 
\begin{align}
\kappa(x, y) \leq \kappa_0(x, y) & \leq 1-(E_y(g)-E_x(g)) \nonumber \\ 
& = \frac{1}{d_x}-\frac{|N_0(y)|}{d_y}-\sum_{a=1}^q \left\{\frac{|U_a(y)|}{d_y}-\frac{|U_a(x)|}{d_x}\right\}\cdot\pmb{1}_{\left\{\frac{|U_a(x)|}{d_x} < \frac{|U_a(y)|}{d_y}\right\}}.
\label{eq:case2bipartite4n}
\end{align}

\subsection*{{\it Case} 3} $f(y)=1$. Consider the 1-Lipschitz function $g: V( G_{(x, y)}) \mapsto \mathbb R$ 
$$g(z)=
\left\{\begin{array}{cc}
-1, & \hbox{ if } z \in N_0(x);\\
1, & \hbox{ if }  z \in N_0(y);\\
g_a(z), & \hbox{ if }  z \in U_a(x)\bigcup U_a(y);\\
0, & \hbox{ otherwise;}\\
\end{array}
\right.
$$
where for $a \in \{1, 2, \ldots, q\}$,  $$g_a(z):=\left\{
\begin{array}{cc}
-1\cdot\pmb{1}_{\left\{\frac{|U_a(x)|}{d_x} \geq \frac{|U_a(y)|}{d_y}\right\}}+1\cdot\pmb{1}_{\left\{\frac{|U_a(x)|}{d_x} < \frac{|U_a(y)|}{d_y}\right\}}, & \hbox{ if } z \in U_a(x);\\
2\cdot\pmb{1}_{\left\{\frac{|U_a(x)|}{d_x} < \frac{|U_a(y)|}{d_y}\right\}}, & \hbox{ if } z \in U_a(y).\\
\end{array}
\right.
$$
This implies,   
\begin{align}
\kappa(x, y)  \leq \kappa_1(x, y) & \leq 1-(E_y(g)-E_x(g)) \nonumber \\ 
&=\frac{2}{d_x}-\frac{2|N_0(y)|}{d_y}-2\sum_{a=1}^q \left\{\frac{|U_a(y)|}{d_y}-\frac{|U_a(x)|}{d_x}\right\}\cdot\pmb{1}_{\left\{\frac{|U_a(x)|}{d_x} < \frac{|U_a(y)|}{d_y}\right\}}.
\label{eq:case3bipartite4}
\end{align}

Note that the RHS of \eqref{eq:case3bipartite4} is twice the RHS of \eqref{eq:case2bipartite4n}. Therefore, taking the minimum of the RHS of \eqref{eq:case3bipartite4} with zero and noting that $|N_0(y)|=d_y - |N_1(y)| -1$, the upper bound in \eqref{eq:thbipartite} follows. 
\end{proof}


\subsection{Ricci Curvature of Graphs with Girth Greater Than 4}
\label{sec:gfive}

In this section we consider graphs $G$ with girth greater than 4. As before, partition $N_G(x)=N_0(x)\bigcup N_2(x)\bigcup \{y\}$, where
\begin{description}
\item[$N_2(x)$]$=\{z\in N_G(x)\backslash\{y\}: d_G(z,N_G(y)=2\}$. 
\item[$N_0(x)$]$=N_G(x)\backslash(N_2(x)\bigcup \{y\})$, is the set of remaining neighbors of $x$, apart from $y$.
\end{description}
Similarly, we can define a partition $N_G(y)=N_0(y)\bigcup N_2(y)\bigcup\{x\}$.


\begin{figure*}[h]
\centering
\begin{minipage}[c]{1.0\textwidth}
\centering
\includegraphics[width=3.8in]
    {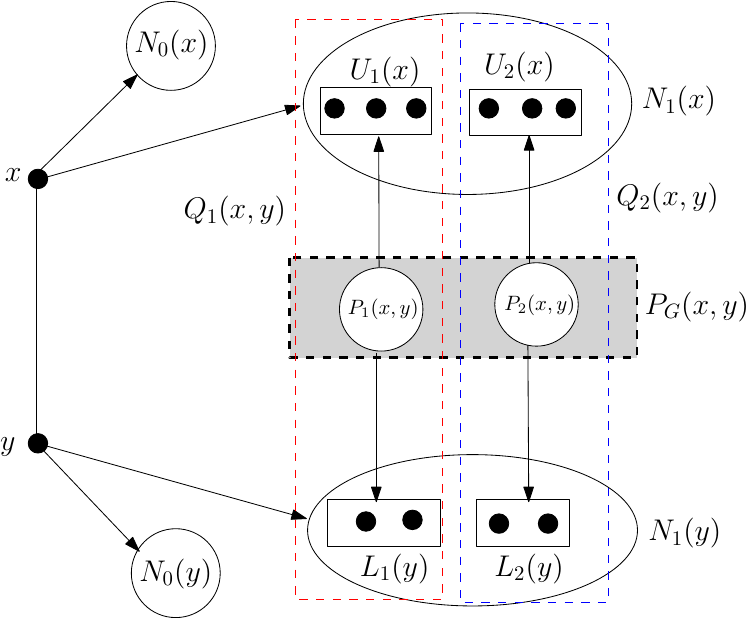}\\
\end{minipage}
\caption{Ricci curvature of graphs with girth at least 5: Structure of the core neighborhood.}
\label{fig:gfive}
\end{figure*}

\begin{thm}
\label{th:gfive}
Let $G=(V(G), E(G))$ be a locally finite unweighted graph with girth $g(G)\ge 5$ and $(x, y)\in E(G)$. Suppose
$Q(x, y)$ is the subgraph of $G_{(x, y)}$ induced by $N_2(x)\bigcup N_2(y)\bigcup P_G(x, y)$, and $Q_1(x, y),$ $ Q_2(x, y), \ldots Q_q(x, y)$ be the connected components of $Q(x, y)$. If $L_a(x)=V(Q_a(x, y))\bigcap N_2(x)$ and $L_a(y)=V(Q_a(x, y))\bigcap N_2(y)$, for $a \in \{1, 2, \ldots, q\}$, then $\rc \leq a(x, y)\wedge b(x, y)$, where
\begin{equation}
a(x, y)=-\left(1-\frac{1}{d_x}-\frac{1}{d_y}\right)_+.
\label{eq:thgfiven}
\end{equation}
and
\begin{equation}
b(x, y)=-2\left(1-\frac{1}{d_x}-\frac{1}{d_y}-\frac{|N_2(x)|}{2d_x}+\frac{1}{2}\sum_{a=1}^q\left(\frac{|L_a(x)|}{d_x}-\frac{|L_a(y)|}{d_y}\right)_+\right)_+.
\label{eq:thgfive}
\end{equation}
\end{thm}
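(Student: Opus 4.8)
The plan is to follow the same $\kappa_{-1}\wedge\kappa_0\wedge\kappa_1$ decomposition used for the bipartite case, first invoking Lemma \ref{truncate} to replace $G$ by its core neighborhood $G_{(x,y)}$ and Lemma \ref{int} to restrict attention to integer-valued $1$-Lipschitz functions $f$ with $f(x)=0$ and $f(y)\in\{-1,0,1\}$. Since $g(G)\ge 5$ there are no triangles and no $4$-cycles on $(x,y)$, so $\Delta_G(x,y)=\emptyset$, $N_1(x)=N_2(x)$, and the core neighborhood has the layered structure of Figure \ref{fig:gfive}: the vertices of $N_2(x)$ and $N_2(y)$ are joined only through length-two paths passing through $P_G(x,y)$, and the reduction lemma lets me delete the edges $\varphi_G(x,y)$. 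I expect $\kappa_{-1}(x,y)=0$ to fall out exactly as in Case 1 of Theorem \ref{th:bipartite}: taking $f\equiv -1$ on $N_G(x)$ and $0$ elsewhere is $1$-Lipschitz and forces $E_y(f)-E_x(f)=1$.

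The substantive work is computing $\kappa_0$ and $\kappa_1$ by analyzing, component by component, the maximum of $\cW(f_a):=\frac{W_{f_a}(L_a(y))}{d_y}-\frac{W_{f_a}(U_a(x))}{d_x}$ over the components $Q_a(x,y)$, exactly paralleling the bipartite argument but now accounting for the intermediate vertices in $P_G(x,y)$. For $\kappa_0$ (i.e.\ $f(y)=0$), I expect the girth-$5$ geometry to be rigid enough that no component can do better than the trivial bound, yielding the clean closed form $\kappa_0(x,y)=-\left(1-\frac{1}{d_x}-\frac{1}{d_y}\right)_+$; the point is that a vertex $p\in P_G(x,y)$ sits at distance $2$ from both $x$ and $y$, so the $1$-Lipschitz constraint through $p$ prevents the kind of simultaneous gain on $L_a(y)$ and loss on $U_a(x)$ that drove the bipartite correction term. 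For $\kappa_1$ (i.e.\ $f(y)=1$), I would enumerate the admissible level assignments on each $Q_a(x,y)$ as in Case 3 above, reduce to finitely many candidate optima, and show the maximum equals $\left\{\frac{|U_a(x)|}{d_x}-\frac{|L_a(y)|}{d_y}\right\}\pmb{1}_{\{\cdots\}}$ plus a fixed term, then sum over $a$ using $\sum_a|U_a(x)|=|N_2(x)|$ and exhibit an explicit extremal $g$ attaining the bound.

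The main obstacle, and the genuine departure from the bipartite proof, is the case analysis on each component $Q_a(x,y)$: the presence of $P_G(x,y)$ vertices as connectors means the components need not be bipartite between $U_a(x)$ and $L_a(y)$, and a $1$-Lipschitz function can take intermediate values on the $P_G$ layer. I would argue that for the optimizing $f$ one may, without loss, push each $z\in N_2(y)$ to $\{1,2\}$ and each $z\in N_2(x)$ to $\{0,1\}$ (a swapping/monotonicity argument mirroring the one that eliminated $f_a(z)=-1$ on $N_1(y)$ above), forcing the value on any intervening $p\in P_G(x,y)$ into a narrow range by the Lipschitz constraints along the length-two paths. After this reduction each component admits only the constant-on-each-layer assignments listed in Case 3, and the verification that the stated $g$ is $1$-Lipschitz on $G_{(x,y)}$ becomes routine. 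Finally I would note that the $-(1-\tfrac{1}{d_x}-\tfrac{1}{d_y})_+$ truncation in $\kappa_0$ and the positive-part in $\kappa_1$ arise precisely because $W_1^G\le 1$ always holds, and conclude $\rc=\kappa_0(x,y)\wedge\kappa_1(x,y)$.
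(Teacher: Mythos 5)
Your proposal has the same skeleton as the paper's proof (core-neighborhood reduction via Lemma \ref{truncate}, integer-valued $1$-Lipschitz $f$ with $f(x)=0$, $f(y)\in\{-1,0,1\}$ via Lemma \ref{int}, $\kappa_{-1}(x,y)=0$, component-wise analysis of $\kappa_0$ and $\kappa_1$), but your two key reduction claims point in the wrong direction, and the one for $\kappa_1$ would derail the computation. Since the objective being maximized is $E_y(f)-E_x(f)$, the optimizer wants $f$ \emph{large} on $N_G(y)$ and \emph{small} on $N_G(x)$; mirroring the bipartite swapping argument therefore pushes the $x$-side values \emph{down} to $\{-1,0\}$, not up to $\{0,1\}$ as you claim. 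Your restriction of $N_2(x)$-values to $\{0,1\}$ excludes the assignment $f\equiv 1$ on $L_a(y)$, $f\equiv -1$ on $U_a(x)$, $f\equiv 0$ on the $P_G(x,y)$-vertices of the component, which is $1$-Lipschitz (all relevant pairs are at distance $2$ through $P_G(x,y)$, since $g(G)\ge 5$ forbids edges between $N_2(x)$ and $N_2(y)$) and is exactly the optimal candidate when $\frac{|U_a(x)|}{d_x}\ge \frac{|L_a(y)|}{d_y}$. Under your reduction every component would be maximized by $f\equiv 2$ on $L_a(y)$, $f\equiv 0$ on $U_a(x)$, and you would end up with $\kappa_1(x,y)=-\bigl(2-\frac{2}{d_x}-\frac{2}{d_y}-\frac{|N_2(x)|}{d_x}\bigr)_+$, i.e.\ you would lose precisely the correction term $\sum_{a}\bigl\{\frac{|U_a(x)|}{d_x}-\frac{|L_a(y)|}{d_y}\bigr\}\pmb{1}_{\{\frac{|U_a(x)|}{d_x}\ge \frac{|L_a(y)|}{d_y}\}}$ that the theorem asserts. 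So this step, as stated, fails.

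Your justification of $\kappa_0$ is also inverted. The clean formula $\kappa_0(x,y)=-\bigl(1-\frac{1}{d_x}-\frac{1}{d_y}\bigr)_+$ holds because the Lipschitz constraint through $p\in P_G(x,y)$ does \emph{not} prevent the simultaneous extremes: vertices of $N_2(x)$ and $N_2(y)$ are never adjacent (no $4$-cycles), only joined by length-two paths, so the function equal to $-1$ on $N_0(x)\cup N_2(x)$, $+1$ on $N_0(y)\cup N_2(y)$, and $0$ on $P_G(x,y)\cup\{x,y\}$ is $1$-Lipschitz and attains the trivial bound $2-\frac{1}{d_x}-\frac{1}{d_y}$. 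If, as you write, the constraint through $p$ ``prevented the simultaneous gain on $L_a(y)$ and loss on $U_a(x)$,'' then $\kappa_0$ would acquire a bipartite-style correction term and the stated formula would be false; what the proof actually requires here is the attainment construction above, which you never exhibit. Two smaller slips: the positive parts in both formulas come from $W_1\ge 1$ (equivalently $\kappa(x,y)\le 0$ for triangle-free graphs, via $\kappa_{-1}(x,y)=0$), not from ``$W_1^G\le 1$''; and for $g(G)\ge 5$ one has $N_1(x)=\emptyset$, so the theorem's $N_1$ must be read as the typo it is (it means $N_2$), rather than asserting $N_1(x)=N_2(x)$.
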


\begin{proof}
The proof of this theorem is similar to the proof of Theorem \ref{th:bipartite}. Using Lemma \ref{truncate} we can replace $G$ by its core neighborhood $G_{(x, y)}$, which is depicted in Figure \ref{fig:gfive}. For any $Z\subseteq  V(G_{(x, y)})$ and any function $f \in \sL_1$, denote by $W_f(Z)=\sum_{z\in Z}f(z)$. Therefore, for $(x, y)\in E(G)$,
\begin{equation}
E_y(f)-E_x(f)=\frac{f(x)+W_f(N_0(y))+W_f(N_2(y))}{d_y}-\frac{f(y)+W_f(N_0(x))+W_f(N_2(x))}{d_x}
\label{eq:wdgfive}
\end{equation}
As before, for $i\in \{-1, 0, 1\}$ define 
$$\kappa_i(x, y):=1-\max_{f\in \sL_1,f(y)=i}(E_y(f)-E_x(f)),$$
and observe that $\rc=\kappa_{-1}(x, y)\wedge\kappa_0(x, y)\wedge\kappa_1(x, y)$. Assuming $f(x)=0$ we consider the following three cases separately.

\subsection*{{\it Case} 1} $f(y)=-1$. This implies that $f(z)\leq 0$ for $z\in N_G(y)$ and $f(z)\geq -1$, for $z \in N_G(x)$. Therefore, from Equation (\ref{eq:wdgfive}) we get
\begin{equation}
E_y(f)-E_x(f)\leq\frac{1}{d_x}+\frac{|N_0(x)|+|N_2(x)|}{d_x}=1.\nonumber
\end{equation}
Moreover, this bound is attained by the function $g(z):=-1\cdot \pmb 1_{\{z\in N_G(x)\bigcup\{y\}\}}$ which is 1-Lipschitz on the core neighborhood of $(x, y)$. This implies,
$\kappa_{-1}(x, y)=0$. 

\subsection*{{\it Case} 2} $f(y)=0$. This implies that $f(z)\leq 1$ for $z\in N_G(y)$ and $f(z)\geq -1$, for $z \in N_G(x)$. Therefore, from Equation (\ref{eq:wdgfive}) we get,
\begin{eqnarray}
E_y(f)-E_x(f)&\leq&\frac{|N_0(y)|+|N_2(y)|}{d_y}+\frac{|N_0(x)|+|N_2(x)|}{d_x},\nonumber\\
&=&2-\frac{1}{d_x}-\frac{1}{d_y}.
\label{eq:case2gfive}
\end{eqnarray}
This implies that $\kappa_0(x, y)\geq -1+\frac{1}{d_x}+\frac{1}{d_y}$. 

Now, consider the 1-Lipschitz function $g: V( G_{(x, y)}) \mapsto \mathbb R$ 
$$g(z)=
\left\{\begin{array}{cc}
-1, & \hbox{ if } z \in N_0(x)\bigcup N_2(x);\\
1, & \hbox{ if }  z \in N_0(y)\bigcup N_2(y);\\
0, & \hbox{ otherwise. }\\
\end{array}
\right.
$$
It is easy to see that $E_y(g)-E_y(g)=2-\frac{1}{d_x}-\frac{1}{d_y}.$ Therefore, $g$ attains the lower bound in Equation (\ref{eq:case2gfive}) and we have
\begin{eqnarray}
\kappa_0(x, y)&=&-1+\frac{1}{d_x}+\frac{1}{d_y}.
\label{eq:case2gfive_N}
\end{eqnarray} 

\subsection*{{\it Case} 3} $f(y)=1$. Now, consider the 1-Lipschitz function $g: V( G_{(x, y)}) \mapsto \mathbb R$ with $g(x)=0$, $g(y)=1$ and 
$$g(z)=
\left\{\begin{array}{cc}
-1, & \hbox{ if } z \in N_0(x);\\
2, & \hbox{ if }  z \in N_0(y);\\
g_a(z), & \hbox{ if }  z \in L_a(x)\bigcup L_a(y).
\end{array}
\right.,
$$
where for $a \in \{1, 2, \ldots, q\}$,  $$g_a(z):=\left\{
\begin{array}{cc}
-1\cdot\pmb{1}_{\left\{\frac{|L_a(x)|}{d_x} \geq \frac{|L_a(y)|}{d_y}\right\}}, & \hbox{ if } z \in L_a(x);\\
1\cdot\pmb{1}_{\left\{\frac{|L_a(x)|}{d_x} \geq \frac{|L_a(y)|}{d_y}\right\}}+2\cdot\pmb{1}_{\left\{\frac{|L_a(x)|}{d_x} < \frac{|L_a(y)|}{d_y}\right\}}, & \hbox{ if } z \in L_a(y);\\
1\cdot\pmb{1}_{\left\{\frac{|L_a(x)|}{d_x} < \frac{|L_a(y)|}{d_y}\right\}}, & \hbox{ if } z \in P_G(x, y).\\
\end{array}
\right.
$$
This implies,   
\begin{align}
\kappa(x, y)  \leq \kappa_1(x, y) & \leq 1-(E_y(g)-E_x(g)) \nonumber \\ 
&=-2+\frac{2}{d_x}+\frac{2}{d_y}+\frac{|N_2(x)|}{d_x}-\sum_{a=1}^q\left\{\frac{|L_a(x)|}{d_x}-\frac{|L_a(y)|}{d_y}\right\}\cdot\pmb{1}_{\left\{\frac{|L_a(x)|}{d_x} \geq \frac{|L_a(y)|}{d_y}\right\}}.
\label{eq:case3five4}
\end{align}

Finally, combining \eqref{eq:case2gfive_N} and \eqref{eq:case3five4} and recalling the definition of $a(x, y)$ and $b(x, y)$ from \eqref{eq:thgfiven} and \eqref{eq:thgfive}, the result follows. 
\end{proof}

\section{Ricci-Flat Graphs of Girth 5} 

The notion of Ricci-flat graphs was first introduced by Chung and Yau \cite{chungyau}. It was defined for regular graphs and was used to prove logarithmic Harnack inequalities for graphs. These inequalities are useful in bounding  the Log-Sobolev constant, and, consequently, the mixing times of Markov chains \cite{diaconis_saloff-coste}. Recently, Lin et al. \cite{ricciflat,tohoku} defined Ricci-flat graphs as graphs where the modified Ricci curvature vanishes on every edge. This definition is motivated by the fact that Ricci-flat manifolds are Riemannian manifolds where the Ricci curvature vanishes. Moreover, this definition does not require the graph to be regular. Ricci flat graphs defined by Chung and Yau are not necessarily Ricci-flat in this sense (for constructions of Ricci-flat graphs under both definitions refer to Lin et al. \cite{ricciflat}).

In this paper, we consider Ricci-flat graphs with respect to Olliver's coarse Ricci curvature, that is, a graph $G$ is said to be {\it Ricci-flat} if $\rc=0$, for all $(x, y)\in E(G)$. The following corollary characterizes Ricci-flat graphs with girth at least 5. Hereafter, only locally finite simple unweighted graphs are considered.

\begin{cor}
A connected graph $G$ is a Ricci-flat graph with $g(G)\geq 5$, if and only if $G$ is one of the following: the path $P_n~(n \geq 2)$, the infinite ray, the infinite path, the cycle $C_n~(n\geq 5)$, or the star graph $T_n~(n\geq 3)$.
\label{cor:ricciflatfive}
\end{cor}

\begin{proof} Suppose $\rc=0$ for all $(x, y)\in E(G)$ and $g(G)\geq 5$. Then it follows from Theorem \ref{th:gfive} that $\frac{1}{d_x}+\frac{1}{d_y}\ge 1$, for all $(x, y)\in E(G)$. This implies that either $d_x=d_y=2$, or $d_x\wedge d_y=1$, for all  $(x, y)\in E(G)$.The following two cases arise: 

\begin{description} 
\item[{\it Case} 1] There is a vertex $v\in V(G)$ with $d_v\geq 3$.  Then all neighbors of $v$ must have degree $1$, with no edges left to connect to other vertices. Thus, the graph $G$ must be a $n$-star $T_n$ rooted at $v$. 

\item[{\it Case} 2]$d_x\le 2$ for all $x\in V(G)$.  If $d_x=2$, for all $x\in V(G)$, then it is easy to see that $G$ is an infinite path or a cycle of length at least 5. Hence, it suffices to assume that there is at least one vertex $V(G)$ with degree $1$. Then there can be at most 2 vertices with degree $1$. Thus, depending on whether the number of degree 1 vertices is one or two, $G$ is either the infinite half ray or  the finite path $P_n$, respectively.
\end{description}
\end{proof}

Lin et al. \cite{ricciflat} characterized Ricci-flat graphs of girth 5 using their modified definition of Ricci curvature. The above corollary is the analogous version of their result using Ollivier's original definition of coarse Ricci curvature. As it happens, the structure of Ollivier's Ricci-flat graphs of girth 5 is much simpler than the structure of the modified Ricci-flat graphs.  Apart from the infinite path and the cycle, the modified Ricci-flat graphs of girth 5 include quite complicated graphs, such as the Peterson graph, the dodecahedral graph, and the half-dodecahedral graph. Ollivier's Ricci-flat graph however includes the $n$-star, which is not included in the modified definition. This illustrates that the structure of Ricci-flat graphs in the two definitions of Ricci curvature are, in fact, quite different.

\section{A General Lower Bound With Maximum Matching and Ricci-Flat Regular Graphs of Girth 4}
\label{sec:matching}


In this section we establish connections between Ricci curvature and the size of the matching in the core neighborhood subgraph. We prove a general lower bound on the Ricci curvature in terms of the size of the maximum matching. The bound is often nearly tight, especially in triangle-free regular graphs. Using this bound we prove a necessary and sufficient condition on the structure of Ricci-flat regular graphs of girth 4.

\subsection{A General Lower Bound With Maximum Matching} Let $G=(V, E)$ be a fixed graph, with $(x, y)\in E(G)$. Recall that $N_G(x)$ and $N_G(y)$ denote the set of neighbors of $x$ and $y$, and $\Delta_G(x, y)=N_G(x) \bigcap N_G(y)$. Define,  $Q_G(x)=N_G(x)\backslash \Delta_G(x, y)$ and $Q_G(y)=N_G(y)\backslash \Delta_G(x, y)$. Let $\overline{H}_G(x, y)$ be the subgraph of $G$ induced by the vertices in $Q_G(x)\bigcup Q_G(y)$. A {\it matching} in $\overline{H}_G(x, y)$ is a collection of disjoint edges $(a, b)\in E(G)$, with $a\in Q_G(x)$ and $b\in Q_G(y)$.

\begin{thm}
Let $G=(V, E)$ be a fixed graph, with $(x, y)\in E(G)$. If $|M_G(x, y)|~(\leq |Q_G(x)|\wedge |Q_G(y)|)$ is the size of the maximum matching in $\overline{H}_G(x, y)$, then 
$$\rc\ge \frac{|\Delta_G(x, y)|}{d_x\vee d_y}-2\Big(1-\frac{|M_G(x, y)|+|\Delta_G(x, y)|}{d_x\vee d_y}\Big).$$
Moreover, if $|M_G(x, y)|=|Q_G(x)|\wedge |Q_G(y)|$, then $\rc\ge \frac{|\Delta_G(x, y)|}{d_x\vee d_y}-2\Big(1-\frac{d_x\wedge d_y}{d_x\vee d_y}\Big)$.
\label{th:matching}
\end{thm}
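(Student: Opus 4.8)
The plan is to bound $\rc = 1 - W_1(m_x,m_y)$ from below by exhibiting an explicit feasible transport plan $\nu\in\mathcal{A}$ in the primal formulation (\ref{rcdef2}) whose cost is at most $3 - (3|\Delta_G(x,y)| + 2|M_G(x,y)|)/(d_x\vee d_y)$; since $\rc = 1 - W_1$, rearranging this gives the first inequality. As $W_1$ is symmetric in its two arguments, $\rc$ is symmetric under interchanging $x$ and $y$, and every quantity in the statement is symmetric as well, so I may assume $d_x\le d_y$ and abbreviate $D = d_x\vee d_y = d_y$, $k = |\Delta_G(x,y)|$, $m = |M_G(x,y)|$.

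The construction rests on three distance facts for the sources $N_G(x)$ (each carrying mass $1/d_x$) and sinks $N_G(y)$ (each demanding $1/d_y$): (i) every common neighbor $z\in\Delta_G(x,y)$ lies in both supports, so it may send mass to itself at cost $0$; (ii) each edge of a matching $M$ in $\overline{H}_G(x,y)$ joins a source in $Q_G(x)$ to a sink in $Q_G(y)$ at distance exactly $1$; and (iii) every source--sink pair $(z_1,z_2)$ satisfies $d(z_1,z_2)\le 3$, via the walk $z_1\to x\to y\to z_2$. I then define $\nu$ in three stages: route $1/d_y$ from each of the $k$ common neighbors to itself (cost $0$), route $1/d_y$ along each of the $m$ matching edges (cost $1$), and dispatch all remaining source mass arbitrarily among the still-unfilled sinks (cost $\le 3$). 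Because $d_x\le d_y$ forces $\min(1/d_x,1/d_y)=1/d_y$, these partial assignments are feasible: a common neighbor or a matched source ships $1/d_y$ cheaply and passes the residue $1/d_x-1/d_y\ge 0$ to the last stage, an unmatched source ships its full $1/d_x$ there, and each filled sink receives exactly $1/d_y$.

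The crux is the mass accounting for the last stage. Summing residues shows the leftover source mass equals $k(1/d_x-1/d_y)+m(1/d_x-1/d_y)+(d_x-k-m)/d_x = 1 - (k+m)/d_y$, which matches exactly the total unmet sink demand $(d_y-k-m)/d_y$; hence any coupling of these residual marginals is admissible and, by fact (iii), costs at most $3(1-(k+m)/d_y)$. Adding the three contributions yields $W_1 \le m/d_y + 3(1-(k+m)/d_y) = 3 - (3k+2m)/D$, so $\rc \ge -2 + (3k+2m)/D$, which is precisely $\frac{|\Delta_G(x,y)|}{d_x\vee d_y} - 2\big(1 - \frac{|M_G(x,y)|+|\Delta_G(x,y)|}{d_x\vee d_y}\big)$. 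For the ``moreover'' part, $|M_G(x,y)| = |Q_G(x)|\wedge|Q_G(y)| = (d_x-k)\wedge(d_y-k) = (d_x\wedge d_y)-k$, so $m+k = d_x\wedge d_y$; substituting this turns $\frac{m+k}{d_x\vee d_y}$ into $\frac{d_x\wedge d_y}{d_x\vee d_y}$ and gives the second inequality at once.

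I expect the main obstacle to be the bookkeeping forced by the asymmetry $1/d_x\ge 1/d_y$: one must verify that the cost-$0$ and cost-$1$ stages can each carry $1/d_y$ without exceeding any source budget, that the count $d_x-k-m\ge 0$ is nonnegative (which follows from $m\le|Q_G(x)| = d_x-k$), and that the residual marginals balance exactly so the crude distance-$3$ routing is legitimate. It is worth emphasizing that maximality of the matching is \emph{not} used for the inequality itself---only the existence of a matching of size $m$---since all unrouted mass is charged the worst-case distance $3$; maximality serves merely to make $m$, and therefore the lower bound, as large as possible.
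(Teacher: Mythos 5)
Your proof is correct, and it reaches the paper's bound by a genuinely different route. You argue on the \emph{primal} side of the transportation problem: you exhibit an explicit feasible plan $\nu$ in (\ref{rcdef2}) --- mass $1/d_y$ held in place at each vertex of $\Delta_G(x,y)$ at cost $0$, mass $1/d_y$ moved across each of the $|M_G(x,y)|$ matching edges at cost $1$, and the exactly balancing residue (total mass $1-\frac{|\Delta_G(x,y)|+|M_G(x,y)|}{d_x\vee d_y}$ after assuming $d_x\le d_y$) shipped at the worst-case distance $3$ --- yielding $W_1(m_x,m_y)\le 3-\frac{3|\Delta_G(x,y)|+2|M_G(x,y)|}{d_x\vee d_y}$, which rearranges to the stated bound. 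The paper instead argues on the \emph{dual} side: it takes an arbitrary $1$-Lipschitz $f$ with $f(x)=0$, splits $E_y(f)-E_x(f)$ into the matched pairs, the common neighbors, and the leftover vertices, and bounds each group using $|f(z)|\le 1$ on $N_G(x)$, $f(z)\le 2$ on $N_G(y)$, and $|f(a_i)-f(b_i)|\le 1$ across matched pairs. The two computations are Kantorovich-dual to one another and produce the identical constant, and both use only the existence (not the maximality) of a matching of the given size, as you note. Your version has two small advantages: a feasible coupling upper-bounds the infimum in (\ref{rcdef2}) unconditionally, so you need neither the integrality reduction of Lemma \ref{int} nor any normalization of test functions; and your mass-balance accounting makes transparent that the worst-case factor $3$ (from $d(z_1,z_2)\le d(z_1,x)+d(x,y)+d(y,z_2)\le 3$) is charged to exactly the unmatched mass, which is where the constant $-2$ in the theorem comes from. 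The paper's dual form has the advantage that the same display is reused almost verbatim later (Lemma \ref{bp-use}), where the matching-edge constraint $|f(a_i)-f(b_i)|\le 1$ is simply relaxed to $\le 2$ for $2$-matchings; reproducing that extension in your primal language would require re-running the routing argument with cost $2$ on matched pairs, which works equally well but is not free. Your treatment of the ``moreover'' clause, via $|M_G(x,y)|+|\Delta_G(x,y)|=(d_x\wedge d_y)$, matches the paper's direct substitution.
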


\begin{proof}
W.l.o.g. assume $d_x\leq d_y$ and let $|M_G(x, y)|=k$. By the definition of matching there exists $T_x:=\{a_1, a_2, \ldots, a_k\}\subseteq Q_G(x)$ and $T_y:=\{b_1, b_2, \ldots, b_k\}\subseteq Q_G(y)$ such that $d_G(a_i, b_i)=1$. Consider any 1-Lipschitz function $f: V(G)\mapsto \mathbb Z$. Lemma \ref{int} implies that it suffices to optimize over $f(x)=0$. This means that $|f(z)|\le 1$ for all $z\in N_G(x)$, and $|f(a_i)-f(b_i)|\le 1$. 
\begin{eqnarray}
T_{xy}(f)&=&\sum\limits_{i=1}^k\left(\frac{f(b_i)}{d_y}-\frac{f(a_i)}{d_x}\right)+\sum\limits_{z\in \Delta_G(x, y)}\left(\frac{f(z)}{d_y}-\frac{f(z)}{d_x}\right)+\sum\limits_{z\in Q_G(y)\backslash T_y}\frac{f(z)}{d_y}-\sum\limits_{z\in Q_G(x)\backslash T_x}\frac{f(z)}{d_x}\nonumber\\
&\le &\frac{k}{d_y}+ \left(\frac{1}{d_x}-\frac{1}{d_y}\right)\left\{\sum\limits_{i=1}^k|f(a_i)|+\sum\limits_{z\in \Delta_G(x, y)}|f(z)|\right\}+\sum\limits_{z\in Q_G(y)\backslash T_y}\frac{f(z)}{d_y}+\frac{|Q_G(x)\backslash T_x|}{d_x}\nonumber\\
&\le &\frac{k}{d_y}+ \left(\frac{1}{d_x}-\frac{1}{d_y}\right)\left\{k+|\Delta_G(x, y)|\right\}+\frac{2|Q_G(y)\backslash T_y|}{d_y}+\frac{|Q_G(x)\backslash T_x|}{d_x}\nonumber\\
&=&\frac{k}{d_y}+ \left(\frac{1}{d_x}-\frac{1}{d_y}\right)\left\{k+|\Delta_G(x, y)|\right\}+\frac{2(|Q_G(y)|-k)}{d_y}+\frac{|Q_G(x)|-k}{d_x}
\label{eq:matching}
\end{eqnarray}
The result follows from noting that $\rc=1-\sup_{f\in \sL_1, f(x)=0}T_{xy}(f)$, and $|Q_G(x)|=d_x-|\Delta_G(x, y)|$, $|Q_G(y)|=d_y-|\Delta_G(x, y)|$. If $d_x\le d_y$ and $k=|Q_G(x)|\wedge |Q_G(y)|$, then $k=|Q_G(x)|=d_x-|\Delta_G(x, y)|$, and the result follows from the previous bound by direct substitution.
\end{proof}

Combining the lower bound in the above theorem and the upper bound from Theorem \ref{th:jost} we get
\begin{equation}
\frac{|\Delta_G(x, y)|}{d_x\vee d_y}-2\Big(1-\frac{|M_G(x, y)|+|\Delta_G(x, y)|}{d_x\vee d_y}\Big)\leq \rc\leq \frac{|\Delta_G(x, y)|}{d_x\vee d_y}.
\label{eq:matchingbound}
\end{equation}

In the following section we obtain a necessary and sufficient condition on the structure of regular graphs for which the upper bound in Equation \ref{eq:matchingbound} is tight. As a consequence we characterize Ricci-flat regular graphs with girth 4.

\subsection{Ricci-Flat Regular Graphs with Girth 4} 
Recall that the {\it Birkhoff} polytope $B_n$ is the convex polytope in $\mathbb R^{n^2}$ whose points are the {\it doubly stochastic matrices}, that is, the $n \times n$ matrices whose entries are non-negative real numbers and whose rows and columns each add up to 1 \cite{ziegler}. The {\it Birkhoff-von Neumann theorem} states that the extreme points of the Birkhoff polytope are the {\it permutation matrices}, that is, matrices with exactly one entry 1 in each row and each column and 0 elsewhere.

Using this result on the Birkhoff polytope and Theorem \ref{th:matching} it is easy to get a necessary and sufficient condition on the structure of Ricci-flat graphs with girth 4 which are regular. In fact, we shall prove a much stronger result where we give a necessary and sufficient condition on the structure of graphs for which the upper bound in Equation \ref{eq:matchingbound} is an equality.

\begin{thm}
For be a graph $G=(V, E)$, with $(x, y)\in E(G)$ and $d_x=d_y=d$, $\rc=\frac{|\Delta_G(x, y)|}{d}$, if and only if there is a perfect matching between $Q_G(x)$ and $Q_G(y)$. 
\end{thm}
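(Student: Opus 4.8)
The plan is to exploit the regularity hypothesis $d_x=d_y=d$ to recast the transportation problem (\ref{rcdef2}) in terms of the Birkhoff polytope. Since every $\nu\in\mathcal A$ now has all row and column sums equal to $1/d$, the rescaled matrix $P:=d\,\nu$ ranges over exactly the doubly stochastic $d\times d$ matrices with rows indexed by $N_G(x)$ and columns by $N_G(y)$. Writing $C(z_1,z_2):=d_G(z_1,z_2)$ for the cost, we have $d\cdot W_1(m_x,m_y)=\min_{P}\sum_{z_1,z_2}P(z_1,z_2)C(z_1,z_2)$, the minimum being attained on the compact Birkhoff polytope. Because $\rc=1-W_1(m_x,m_y)$, the identity $\rc=|\Delta_G(x,y)|/d$ is equivalent to the existence of a doubly stochastic $P$ of cost exactly $d-|\Delta_G(x,y)|$. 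Here I would record the structural facts I will use: $C(z_1,z_2)=0$ only on the diagonal cells $(w,w)$ with $w\in\Delta_G(x,y)$; the sets $Q_G(x),Q_G(y)$ are disjoint; and $|Q_G(x)|=|Q_G(y)|=d-|\Delta_G(x,y)|=:m$.

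For the sufficiency (\emph{if}) direction, a perfect matching between $Q_G(x)$ and $Q_G(y)$ is a bijection $\sigma:Q_G(x)\to Q_G(y)$ with $a\sim\sigma(a)$. Setting $P(w,w)=1$ for $w\in\Delta_G(x,y)$ and $P(a,\sigma(a))=1$ for $a\in Q_G(x)$ yields a doubly stochastic matrix of cost $0\cdot|\Delta_G(x,y)|+1\cdot m=d-|\Delta_G(x,y)|$. Hence $W_1(m_x,m_y)\le(d-|\Delta_G(x,y)|)/d$, i.e. $\rc\ge|\Delta_G(x,y)|/d$, and the upper bound of Theorem~\ref{th:jost} forces equality. (Equivalently, this direction follows at once from the perfect-matching case of Theorem~\ref{th:matching}.)

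For the necessity (\emph{only if}) direction, I would first prove the cost lower bound that pins down any optimal plan. Since $C$ is a non-negative integer with $C(z_1,z_2)\ge 2$ whenever $z_1\ne z_2$ are non-adjacent, one has $C\ge \mathbf 1\{C\ge 1\}+\mathbf 1\{C\ge 2\}$, so for any doubly stochastic $P$,
\[
\sum_{z_1,z_2}P(z_1,z_2)C(z_1,z_2)\ge\Big(d-\sum_{w\in\Delta_G(x,y)}P(w,w)\Big)+\mu_{\ge 2}(P)\ge d-|\Delta_G(x,y)|,
\]
where $\mu_{\ge 2}(P)$ is the mass $P$ places on cells of cost at least $2$, and I used $\sum_w P(w,w)\le|\Delta_G(x,y)|$. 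If $\rc=|\Delta_G(x,y)|/d$, an optimal $P$ attains equality throughout, which forces (i) $P(w,w)=1$ for every $w\in\Delta_G(x,y)$ and (ii) $\mu_{\ge 2}(P)=0$. Double stochasticity together with (i) zeros out the remaining entries of the $\Delta_G(x,y)$-rows and columns, so the restriction $P'$ of $P$ to $Q_G(x)\times Q_G(y)$ is itself an $m\times m$ doubly stochastic matrix; by (ii) it is supported only on cost-$1$ cells, i.e. on pairs $(a,b)$ with $a\in Q_G(x)$, $b\in Q_G(y)$, $a\sim b$.

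The final and most delicate step is to extract an honest matching from this fractional certificate. By the Birkhoff--von Neumann theorem, $P'=\sum_i\lambda_i\Pi_i$ is a convex combination of permutation matrices with $\lambda_i>0$; since all entries are non-negative, each $\Pi_i$ is supported inside $\mathrm{supp}(P')$, hence on adjacent pairs, and therefore is precisely a perfect matching between $Q_G(x)$ and $Q_G(y)$. I expect the main obstacle to lie in the equality analysis: carefully arguing that optimality collapses the support of $P$ onto the $\Delta_G(x,y)$-diagonal and the cross-adjacency edges, and then invoking Birkhoff--von Neumann so that a $0/1$ matching can be read off from a merely doubly stochastic optimum.
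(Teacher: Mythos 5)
Your proposal is correct and takes essentially the same route as the paper: both identify the rescaled transportation polytope with the Birkhoff polytope, use the cost accounting that forces $W_1\ge 1-|\Delta_G(x,y)|/d$ with equality only when all mass sits on the $\Delta_G(x,y)$-diagonal and on adjacent pairs in $Q_G(x)\times Q_G(y)$, and invoke the Birkhoff--von Neumann theorem to produce the matching, with sufficiency following from Theorem \ref{th:matching} exactly as in the paper. The only difference is the order of steps --- the paper first passes to an extreme-point (permutation) optimum and then runs the cost analysis, whereas you run the equality analysis on an arbitrary optimal doubly stochastic plan and then decompose --- which is a cosmetic variation, not a different argument.
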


\begin{proof}If there is a perfect matching between $Q_G(x)$ and $Q_G(y)$, then by Theorem \ref{th:matching}, $\rc= \frac{\Delta_G(x, y)}{d}$.

Conversely, suppose $\rc=\frac{\Delta_G(x, y)}{d}$. From Equation \ref{rcdef2}
$$\rc=1-\inf_{\nu\in \mathcal{A}}\sum_{z_1\in N_G(x)}\sum_{z_2\in N_G(y)}\nu(z_1,z_2)d(z_1,z_2),
$$
where $\mathcal{A}$ is the set of all $d\times d$ matrices with entries indexed by $N_G(x)\times N_G(y)$ such that $\nu(x',y')\ge 0$, $\sum_{z\in N_G(y)}\nu(x',z)=\frac{1}{d}$, and $\sum_{z\in N_G(x)}\nu(z,y')=\frac{1}{d}$, for all $x'\in N_G(x)$ and $y'\in N_G(y)$. Therefore, $\mathcal A$ forms a Birkhoff polytope in $\mathbb R^{d^2}$ (after multiplying with $d$), and since $\rc$ is a linear function defined over $\mathcal A$, it is maximized at one the extreme points. Therefore, by the Birkhoff-von Neumann theorem the optimal transfer plan is a permutation matrix.

To complete the proof, note that the optimal transfer plan, which is given by a permutation matrix, cannot transfer any mass to or from $\Delta_G(x,y)$. Moreover, each transfer must be over a path of length $1$. This is because a mass of $1/d$ needs to be transferred by a path of length at least $1$ for all vertices in $N_G(x)\backslash\Delta_G(x,y)$, which already gives $W_1(x,y)\ge 1-\Delta_G(x,y)/d$, and so any further mass transfer will result in $\kappa(x,y)<\Delta_G(x,y)/d$, a contradiction.  This implies that there must be a perfect matching between $Q_G(x)$ and $Q_G(y)$.
\end{proof}

When $G$ is triangle free, by definition $Q_G(x)=N_G(x)$ and $Q_G(y)=N_G(y)$. The following corollary is then immediate from the above theorem:

\begin{cor}
A connected graph $G$ with $g(G)=4$ and $d_x=d_y=d$ has $\rc=0$ if and only if there is a perfect matching between $N_G(x)$ and $N_G(y)$.
\label{cor:ricciflat4}
\end{cor}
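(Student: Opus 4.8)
The plan is to deduce this corollary directly from the preceding theorem, which states that for a graph $G$ with $(x,y)\in E(G)$ and $d_x=d_y=d$, one has $\rc=\frac{|\Delta_G(x,y)|}{d}$ if and only if there is a perfect matching between $Q_G(x)$ and $Q_G(y)$. The key observation is that the girth hypothesis $g(G)=4$ forces $G$ to be triangle-free, which collapses the general statement into the clean form asserted here.

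First I would note that $g(G)=4$ means the shortest cycle in $G$ has length $4$, so in particular $G$ contains no triangles. Consequently, for the edge $(x,y)\in E(G)$ there can be no common neighbor $z$ of $x$ and $y$, since such a $z$ would produce a triangle on vertices $x,y,z$. This gives $\Delta_G(x,y)=N_G(x)\cap N_G(y)=\emptyset$, so $|\Delta_G(x,y)|=0$. By the definitions $Q_G(x)=N_G(x)\setminus\Delta_G(x,y)$ and $Q_G(y)=N_G(y)\setminus\Delta_G(x,y)$, the emptiness of $\Delta_G(x,y)$ immediately yields $Q_G(x)=N_G(x)$ and $Q_G(y)=N_G(y)$.

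Next I would substitute these identities into the theorem. Since $|\Delta_G(x,y)|=0$, the quantity $\frac{|\Delta_G(x,y)|}{d}$ equals $0$, so the theorem's condition $\rc=\frac{|\Delta_G(x,y)|}{d}$ becomes precisely $\rc=0$, which is the Ricci-flat condition for the edge $(x,y)$. Simultaneously, the matching between $Q_G(x)$ and $Q_G(y)$ whose existence the theorem characterizes is exactly a perfect matching between $N_G(x)$ and $N_G(y)$, because $Q_G(x)=N_G(x)$ and $Q_G(y)=N_G(y)$. Applying the theorem's biconditional then gives that $\rc=0$ holds if and only if there is a perfect matching between $N_G(x)$ and $N_G(y)$, which is exactly the claim.

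This corollary is essentially a specialization, so there is no substantial obstacle; the only point requiring a moment of care is confirming that $g(G)=4$ genuinely rules out triangles rather than merely asserting the existence of a $4$-cycle. Since the girth is the length of the \emph{shortest} cycle, $g(G)=4$ precludes any cycle of length $3$, so triangle-freeness is guaranteed. One should also observe that $d_x=d_y=d$ ensures the matching in question is between two sets of equal size $d$, so that a matching saturating both sides is genuinely a perfect matching in the bipartite sense between $N_G(x)$ and $N_G(y)$; this is consistent with the regularity hypothesis built into the theorem. With these remarks in place the biconditional transfers verbatim, completing the proof.
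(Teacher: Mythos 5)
Your proof is correct and is exactly the paper's argument: the paper likewise observes that $g(G)=4$ forces triangle-freeness, hence $\Delta_G(x,y)=\emptyset$, $Q_G(x)=N_G(x)$, $Q_G(y)=N_G(y)$, and then reads the corollary off as an immediate specialization of the preceding theorem's biconditional. Nothing is missing; your extra remarks about girth and equal set sizes are just the same "immediate" step spelled out.
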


This implies that regular, triangle-free Ricci-flat graphs must have a perfect matching between $N_G(x)$ and $N_G(y)$ for all $(x, y)\in E(G)$.  The $n$-dimensional integer lattice $\mathbb Z^n$, the $n$-dimensional hypercube $C_2^n$, a cycle $C_n$ of length $n\ge 4$, and the complete bipartite graph $K_{n, n}$, are examples of regular Ricci-flat graphs of girth 4. Identifying the set of all such regular graphs appears to be difficult graph theory problem, which remains open.

\section{Ricci Curvature of Random Graphs}

In this section we study the behavior of Ollivier's Ricci-curavture for Erd\H os-R\'enyi random graphs 
$\mathbb G(n ,p)$ in different regimes of $p$.  As we saw in the previous section, the Ricci curvature is greatly determined by the size of matchings in the core neighborhood subgraph. In this section we will prove a technical matching lemma, establish properties of matchings in random bipartite graphs, and use these results to obtain Ricci curvature of random graphs.

\subsection{A More Technical Matching Lemma}Let $G=(V, E)$ be a fixed graph, with $(x, y)\in E(G)$. Define, $R_G(x)=(N_G(x)\backslash\{y\})\backslash \Delta_G(x, y)$ and $R_G(y)=(N_G(y)\backslash\{x\})\backslash \Delta_G(x, y)$. Let $H_G(x, y)$ be the subgraph of $G$ induced by the vertices in $R_G(x)\bigcup R_G(y)$. The subgraph $H_G(x, y)$ is said to have a $m$-{\it matching} of {\it size} $k$ if there exists $T_x:=\{a_1, a_2, \ldots, a_k\}\subseteq R_G(x)$ and $T_y:=\{b_1, b_2, \ldots, b_k\}\subseteq R_G(y)$ such that $d_G(a_i, b_i)\leq m$, for $m \in \mathbb Z^+$ and $k \leq |R_G(x)|\wedge |R_G(y)|$.  Note that a $1$-{\it matching} of size $k$ is just the standard bipartite matching of size $k$ between $R_G(x)$ and $R_G(y)$ in the subgraph $H_G(x, y)$.

The following lemma gives a lower bound on the Ricci curvature in terms of the size of 2-matchings in $H_G(x, y)$. We shall use this lemma later to bound Ricci curvature of random graphs.

\begin{lem}\label{bp-use}Let $G=(V, E)$ be a fixed graph, with $(x, y)\in E(G)$, and $H_G(x, y)$ be the subgraph of $G$ induced by the vertices in $R_G(x)\bigcup R_G(y)$. If there exists a 2-matching of size $k~(\leq |R_G(x)|\wedge |R_G(y)|)$ in the  $H_G(x, y)$,
then
$$\rc\ge -2+\frac{3|\Delta_G(x, y)|+k+2}{d_x\vee d_y}.$$
Moreover, if $k=|R_G(x)|\wedge |R_G(y)|$, then $\rc\ge -2+\frac{2|\Delta_G(x, y)|+d_x\wedge d_y+1}{d_x\vee d_y}$. 
\end{lem}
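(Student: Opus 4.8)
The plan is to mimic the argument in Theorem~\ref{th:matching}, but now exploiting a $2$-matching rather than an ordinary ($1$-)matching, and carefully accounting for the extra transportation cost that a length-$2$ path incurs. As before, I would invoke Lemma~\ref{int} to restrict attention to integer-valued $1$-Lipschitz functions $f$ normalized by $f(x)=0$, so that $\rc=1-\sup_{f\in\sL_1, f(x)=0} T_{xy}(f)$. W.l.o.g.\ assume $d_x\le d_y$. Let $T_x=\{a_1,\dots,a_k\}\subseteq R_G(x)$ and $T_y=\{b_1,\dots,b_k\}\subseteq R_G(y)$ be the vertices realizing the $2$-matching, so that $d_G(a_i,b_i)\le 2$ for each $i$; the normalization gives $|f(z)|\le 1$ for all $z\in N_G(x)$, and $|f(a_i)-f(b_i)|\le 2$.

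Next I would decompose $T_{xy}(f)$ into the contribution from the matched pairs $(a_i,b_i)$, the common neighbors $\Delta_G(x,y)$, the unmatched remainder of $R_G(y)$, the unmatched remainder of $R_G(x)$, and the vertex $y$ itself (which belongs to $N_G(x)$ but not to $R_G(x)$; symmetrically $x\in N_G(y)\setminus R_G(y)$). The key single-pair estimate is that, writing each matched term as $\frac{f(b_i)}{d_y}-\frac{f(a_i)}{d_x}=\frac{f(b_i)-f(a_i)}{d_y}+\bigl(\frac1{d_x}-\frac1{d_y}\bigr)(-f(a_i))$, one bounds $f(b_i)-f(a_i)\le 2$ and $|f(a_i)|\le 1$; this is precisely where the $2$-matching costs one more unit in the numerator than the $1$-matching did in Theorem~\ref{th:matching}, which is what degrades the $k$ coefficient and produces the $+2$ constant in the bound. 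The common neighbors contribute $\bigl(\frac1{d_x}-\frac1{d_y}\bigr)\sum_{z\in\Delta_G(x,y)}|f(z)|\le\bigl(\frac1{d_x}-\frac1{d_y}\bigr)|\Delta_G(x,y)|$, and the unmatched vertices of $R_G(y)$ each contribute at most $\frac{2}{d_y}$ while those of $R_G(x)$ contribute at most $\frac1{d_x}$. Finally the $y$-versus-$x$ term is handled by $f(x)=0$ and $|f(y)|\le 1$.

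Assembling these pieces and collecting terms, using $|R_G(x)|=d_x-|\Delta_G(x,y)|-1$ and $|R_G(y)|=d_y-|\Delta_G(x,y)|-1$, should yield the stated lower bound $\rc\ge -2+\frac{3|\Delta_G(x,y)|+k+2}{d_x\vee d_y}$ after taking $d_x\vee d_y=d_y$ and simplifying; the coefficient $3|\Delta_G(x,y)|$ arises from combining the $\Delta$ contribution with the counts $|R_G(y)|$ and $|R_G(x)|$ in the unmatched sums. For the ``moreover'' part, when $k=|R_G(x)|\wedge|R_G(y)|=|R_G(x)|=d_x-|\Delta_G(x,y)|-1$ (under $d_x\le d_y$), I would substitute this value of $k$ and simplify to obtain $\rc\ge -2+\frac{2|\Delta_G(x,y)|+d_x\wedge d_y+1}{d_x\vee d_y}$.

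The main obstacle I anticipate is bookkeeping rather than conceptual difficulty: one must track exactly which vertices lie in $N_G(x)$ versus $R_G(x)$ (the set $R_G$ here excludes both $\Delta_G(x,y)$ \emph{and} the endpoint, unlike $Q_G$ in Theorem~\ref{th:matching} which retains the edge endpoint), so the off-by-one constants $+1$ and $+2$ must be reconciled carefully against the definitions. A secondary subtlety is justifying that the single-pair bound $f(b_i)-f(a_i)\le 2$ is simultaneously attainable with $|f(a_i)|\le 1$ and the extremal choices on the unmatched sets; since the stated result is only an inequality (a lower bound on $\rc$, i.e.\ an upper bound on $T_{xy}(f)$), no explicit extremal $g$ need be exhibited, which simplifies the argument compared with the exact formulas of Theorems~\ref{th:bipartite} and~\ref{th:gfive}.
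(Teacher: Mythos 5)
Your proposal is correct and follows essentially the same route as the paper's proof: the same decomposition of $T_{xy}(f)$ into matched pairs, $\Delta_G(x,y)$, the unmatched parts of $R_G(x)$ and $R_G(y)$, and the endpoint term, with the same single-pair bound $|f(a_i)-f(b_i)|\le 2$, $|f(a_i)|\le 1$ producing the weakened coefficient, and the same substitution $k=|R_G(x)|=d_x-1-|\Delta_G(x,y)|$ for the ``moreover'' part. The arithmetic you outline does assemble to exactly $-2+\frac{3|\Delta_G(x,y)|+k+2}{d_x\vee d_y}$, so nothing is missing.
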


\begin{proof}
W.l.o.g. assume $d_x\leq d_y$. By the definition of $m$-matching there exists $T_x:=\{a_1, a_2, \ldots, a_k\}\subseteq R_G(x)$ and $T_y:=\{b_1, b_2, \ldots, b_k\}\subseteq R_G(y)$ such that $d_G(a_i, b_i)\leq 2$. Consider any 1-Lipschitz function $f: V(G)\mapsto \mathbb Z$. Lemma \ref{int} implies that it suffices to optimize over $f(x)=0$. This means that $|f(z)|\le 1$ for all $z\in N_G(x)$, and $|f(a_i)-f(b_i)|\le 2$. If $T_{xy}(f)=\mathbb E_x(f)-\mathbb E_y(f)$, then from calculations similar to the proof of Theorem \ref{th:matching}
\begin{eqnarray}
T_{xy}(f)&=&-\frac{f(y)}{d_x}+\sum\limits_{i=1}^k\left(\frac{f(b_i)}{d_y}-\frac{f(a_i)}{d_x}\right)+\sum\limits_{z\in \Delta_G(x, y)}\left(\frac{f(z)}{d_y}-\frac{f(z)}{d_x}\right)+\sum\limits_{z\in R_G(y)\backslash T_y}\frac{f(z)}{d_y}-\sum\limits_{z\in R_G(x)\backslash T_x}\frac{f(z)}{d_x}\nonumber\\
&\leq & \frac{|f(y)|}{d_x}+\frac{2k}{d_y}+ \left(\frac{1}{d_x}-\frac{1}{d_y}\right)\left\{k+|\Delta_G(x, y)|\right\}+\frac{2(|R_G(y)|-k)}{d_y}+\frac{|R_G(x)|-k}{d_x}.
\label{eq:matching}
\end{eqnarray}
Note that $\rc=1-\sup_{f\in \sL_1, f(x)=0}T_{xy}(f)$. Therefore, simplifying Equation \ref{eq:matching} the result follows. Moreover, if $d_x\le d_y$ and $k=|R_G(x)|\wedge |R_G(y)|$, then $k=|R_G(x)|=d_x-1-|\Delta_G(x, y)|$, and the result follows from the previous bound by direct substitution.
\end{proof}

\subsection{Matchings in Random Bipartite Graphs}
Matchings in random graphs are well studied in the literature \cite{janson}, beginning with the celebrated result of Erd\H os and R\'enyi \cite{erdosrenyi} on the existence of perfect matchings. 
The proof of this result relies on the Hall's marriage theorem. We shall use a stronger version of the Hall's theorem to obtain an analogous result about the existence of near-perfect matching in random bipartite graphs. We will use this result later to prove Ricci curvature of random graphs.


Recall Hall's marriage theorem which states that a bipartite graph $G= (V, E)$ with bipartition $(A, B)$ has a perfect matching if and only if for all $X\subseteq A$, $|N_G(X)|\geq |X|$, where $N_G(X)=\bigcup_{x\in X}N_G(x)$. We shall need the following strengthening of the Hall's theorem \cite{diestel}:

\begin{thm}\cite{diestel}
\label{mhall}
Consider a bipartite graph $G= (V, E)$ with bipartition $(A, B)$. For $X\subseteq A$, define $\delta(X)=|X|-|N_G(X)|$, where $N_G(X)=\bigcup_{x\in X}N_G(x)$. Let
$\delta_{\max}= \max_{X\subseteq A}\delta(X)$. Then the size of the maximum matching in $G$ is 
$|A|-\delta_{\max}$. \hfill $\Box$
\end{thm}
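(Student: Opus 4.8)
The statement to prove is the defect version of Hall's marriage theorem (Theorem \ref{mhall}), which asserts that the maximum matching in a bipartite graph $G=(V,E)$ with parts $(A,B)$ has size $|A|-\delta_{\max}$, where $\delta_{\max}=\max_{X\subseteq A}\delta(X)$ and $\delta(X)=|X|-|N_G(X)|$.

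The plan is to deduce this defect version from the classical Hall's marriage theorem by a padding argument, which is the cleanest route and avoids re-running an augmenting-path argument from scratch. First I would observe that $\delta_{\max}\ge 0$, since taking $X=\emptyset$ gives $\delta(\emptyset)=0$; write $d=\delta_{\max}$. The idea is to enlarge $B$ by adjoining $d$ new dummy vertices $b_1,\dots,b_d$, each joined to \emph{every} vertex of $A$, forming an augmented bipartite graph $G'=(A\cup (B\cup\{b_1,\dots,b_d\}),E')$. I would then check that $G'$ satisfies the ordinary Hall condition on the side $A$: for any $X\subseteq A$ we have $N_{G'}(X)=N_G(X)\cup\{b_1,\dots,b_d\}$ whenever $X\ne\emptyset$, so $|N_{G'}(X)|=|N_G(X)|+d\ge |N_G(X)|+\delta(X)=|X|$, using $d=\delta_{\max}\ge\delta(X)$; the empty set is trivial. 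Hence by the classical Hall theorem $G'$ has a matching $M'$ saturating all of $A$, of size $|A|$.

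Next I would extract a matching of $G$ from $M'$ by deleting the at most $d$ edges of $M'$ incident to the dummy vertices $b_1,\dots,b_d$. Since each dummy vertex is matched to at most one vertex of $A$, this removes at most $d$ edges, leaving a matching $M\subseteq E$ of $G$ of size at least $|A|-d=|A|-\delta_{\max}$. This establishes the lower bound on the maximum matching size.

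For the matching upper bound I would argue that no matching of $G$ can have size exceeding $|A|-\delta_{\max}$. Let $X^\ast\subseteq A$ attain $\delta(X^\ast)=\delta_{\max}$. In any matching $M$, the vertices of $X^\ast$ that are matched must be matched into $N_G(X^\ast)$, so at most $|N_G(X^\ast)|$ vertices of $X^\ast$ are saturated, leaving at least $|X^\ast|-|N_G(X^\ast)|=\delta_{\max}$ vertices of $A$ unsaturated; since $|M|$ equals $|A|$ minus the number of unsaturated vertices of $A$, we get $|M|\le |A|-\delta_{\max}$. Combining the two bounds gives the claimed exact value. The only genuine subtlety — and the step I would be most careful about — is the verification of the Hall condition for $G'$, where the case $X=\emptyset$ must be handled separately (so that the spurious term $d$ is not forced onto the empty set) and where the inequality $\delta_{\max}\ge\delta(X)$ for all $X$ is used precisely. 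Everything else is bookkeeping.
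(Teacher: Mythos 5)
Your proof is correct. Note that the paper itself offers no proof of this statement --- it is quoted from Diestel's \emph{Graph Theory} and used as a black box (hence the $\Box$) --- so there is no internal argument to compare against; your padding argument (adjoin $\delta_{\max}$ dummy vertices joined to all of $A$, verify Hall's condition in the augmented graph, then discard the at most $\delta_{\max}$ dummy edges, with the upper bound coming from the extremal defect set $X^\ast$) is precisely the standard derivation of the defect form from the classical marriage theorem, and is essentially the proof given in the cited reference. Both directions, including the separate treatment of $X=\emptyset$, are handled correctly.
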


Using this theorem we now prove the following lemma about the existence of near-perfect matching in random bipartite graphs.

\begin{lem} 
\label{matched}For every $\varepsilon\in (0,1)$,
$$\lim_{n \rightarrow \infty}\mathbb P(\mathbb G(n, n, p)~has~a~matching~of~size~n(1-\varepsilon))=
\left\{
\begin{array}{ccc}
 0& if~np\rightarrow0, \\
 1& if~np \rightarrow\infty.
\end{array}
\right.
.$$
\end{lem}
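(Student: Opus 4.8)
The plan is to prove the two regimes separately, using Theorem \ref{mhall} (the defect form of Hall's theorem) to convert the matching question into a purely combinatorial estimate on neighborhood sizes in $\mathbb{G}(n,n,p)$. By Theorem \ref{mhall}, $\mathbb{G}(n,n,p)$ fails to have a matching of size $n(1-\varepsilon)$ precisely when $\delta_{\max} = \max_{X \subseteq A} (|X| - |N_G(X)|) > n\varepsilon$, i.e. when there exists a set $X$ on the $A$-side whose neighborhood $N_G(X)$ on the $B$-side is smaller than $|X| - n\varepsilon$. So for each regime I would estimate the probability of the event $\{\exists X : |X| - |N_G(X)| > n\varepsilon\}$ by a union bound over the possible sizes of $X$ and its neighborhood.

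For the regime $np \to 0$ (no matching), I would exhibit a specific obstruction. When $np \to 0$, the expected degree of each vertex tends to $0$, so a positive proportion of vertices on the $A$-side are isolated: the number of isolated $A$-vertices is Binomial$(n, (1-p)^n)$ with mean $n(1-p)^n \to n$ (since $(1-p)^n \approx e^{-np} \to 1$). Any isolated vertex in $A$ cannot be matched, so the maximum matching has size at most $n$ minus the number of isolated vertices. A first- and second-moment (Chebyshev) argument then shows that with probability tending to $1$ the number of isolated vertices exceeds $n\varepsilon$, forcing the maximum matching below $n(1-\varepsilon)$. I would write $N_{\mathrm{iso}}$ for the count of isolated $A$-vertices, compute $\mathbb{E}[N_{\mathrm{iso}}] = n(1-p)^n$ and $\mathrm{Var}(N_{\mathrm{iso}})$, and conclude $\mathbb{P}(N_{\mathrm{iso}} \le n\varepsilon) \to 0$.

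For the regime $np \to \infty$ (matching exists with probability $1$), I would bound $\mathbb{P}(\delta_{\max} > n\varepsilon)$ directly. Take $X \subseteq A$ with $|X| = s$ and suppose $|N_G(X)| = t \le s - n\varepsilon$. Fixing a set $S \subseteq B$ of size $t$, the event $N_G(X) \subseteq S$ means every vertex of $X$ avoids all $n - t$ vertices of $B \setminus S$, which has probability $(1-p)^{s(n-t)}$. A union bound gives
\begin{equation*}
\mathbb{P}(\delta_{\max} > n\varepsilon) \le \sum_{s} \sum_{t \le s - n\varepsilon} \binom{n}{s}\binom{n}{t}(1-p)^{s(n-t)}.
\end{equation*}
Since $t \le s - n\varepsilon \le n - n\varepsilon$, the exponent satisfies $s(n-t) \ge s \cdot n\varepsilon$, and using $(1-p) \le e^{-p}$ together with the crude bounds $\binom{n}{s}, \binom{n}{t} \le 2^n$, each term is at most $4^n e^{-s n \varepsilon p}$. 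One checks that $snp\varepsilon$ dominates $n\log 4$ once $np \to \infty$ (as $s \ge n\varepsilon$), so the double sum is bounded by $n^2 \cdot 4^n e^{-n^2 \varepsilon^2 p}$, which tends to $0$. Hence $\delta_{\max} \le n\varepsilon$ with probability tending to $1$, and the maximum matching has size at least $n - n\varepsilon = n(1-\varepsilon)$.

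The main obstacle I anticipate is in the $np \to \infty$ case: the naive union bound over all $X$ with the crude binomial bound $\binom{n}{s} \le 2^n$ may not close for the smallest values of $s$ (near $s = n\varepsilon$), where the Hall-deficit constraint forces $t$ to be very small and the entropy terms $\binom{n}{s}\binom{n}{t}$ are not yet dominated by the exponential suppression. To handle this I would split the sum into small-$s$ and large-$s$ ranges and, in the delicate range, use the sharper estimate $\binom{n}{t} \le (en/t)^t$ rather than $2^n$, balancing it against $(1-p)^{s(n-t)} \le e^{-snp/2}$ for $t \le n/2$. The case $t > n/2$ is easy since then $s > n/2 + n\varepsilon$ is large and the suppression is overwhelming. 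Tracking these ranges carefully so that every term decays is the technical heart of the argument; the isolated-vertex argument for $np \to 0$ is comparatively routine.
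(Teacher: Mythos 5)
Your proposal is correct, but it takes a genuinely different route from the paper in both regimes. For $np \to 0$, the paper argues in one line: a matching of size $n(1-\varepsilon)$ forces $|E(G)| \ge n(1-\varepsilon)$, while $|E(G)| \sim Bin(n^2,p)$ has mean $n^2p = n(np) = o(n)$, so Markov's inequality kills the event; your isolated-vertex count with a second-moment (Chebyshev) argument is correct but heavier than necessary. For $np \to \infty$, both you and the paper reduce the problem to the defect form of Hall's theorem (Theorem \ref{mhall}), but the probabilistic estimates differ: the paper bounds the probability that some $X$ with $|X| \ge \varepsilon n$ violates Hall's condition by comparing $|X|$ with the degree sum $D(X) = \sum_{x \in X} d_x$ and applying Hoeffding's inequality, whereas you run the classical two-set union bound over pairs $(X,S)$ with $N_G(X) \subseteq S$ and $|S| \le |X| - n\varepsilon$, using the exact probability $(1-p)^{|X|(n-|S|)}$. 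Your route is in fact the more robust one: since $D(X) \ge |N_G(X)|$ (many edges from $X$ can land on the same vertex of $B$), the inclusion $\{|N_G(X)| < |X|\} \subseteq \{D(X) < |X|\}$ implicit in the paper's display goes the wrong way, so the degree-sum event does not dominate the neighborhood event; your union bound over candidate neighborhood sets has no such issue and is the standard, airtight argument.

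One further remark: the technical obstacle you anticipate in the small-$s$ range is not actually there. The constraint $t \le s - n\varepsilon$ forces both $s \ge n\varepsilon$ and $n - t \ge n\varepsilon$, so every term of your double sum is at most $4^n e^{-\varepsilon^2 n^2 p}$, and $\varepsilon^2 n^2 p = \varepsilon^2 n(np) \gg n$ precisely because $np \to \infty$. Hence the crude bounds $\binom{n}{s},\binom{n}{t} \le 2^n$ already close the argument uniformly in $s$ and $t$; no splitting into ranges and no sharper entropy estimate $\binom{n}{t} \le (en/t)^t$ is needed.
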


\begin{proof}
Let $G\sim\mathbb G(n, n, p)$ be a random bipartite graph with bipartition $(A, B)$, with $|A|=|B|=n$ and edge probability $p$. 

If $np\rightarrow 0$ and there is a matching of size $n(1-\varepsilon)$ in $G$, then $|E(G)| \geq n(1-\varepsilon)$. But $E(G)\sim Bin(n^2,p)$, and so $\mathbb P(E(G)\ge n(1-\varepsilon))\rightarrow 0$ by Markov's inequality. 

Next, suppose $np\rightarrow\infty$. Let $X\subseteq A$, with $|X|\geq \varepsilon n$. If $D(X)=\sum_{x\in X}d_x$, then $D(X)\sim Bin(n|X|,p)$, and by Hoeffding's inequality
$\mathbb P(D(X)<|X|)\le \exp(-2|X|(np-1)^2)$. Thus, by a union bound, 
\begin{eqnarray}
\mathbb P(\exists X\subseteq A: |N_G(X)| < |X|, |X|\geq \varepsilon n)&\leq &\mathbb P(\exists X\subseteq A: D(X) < |X|, |X|\geq \varepsilon n)\nonumber\\
&\leq &\sum\limits_{|X|=n\varepsilon}^n{n\choose |X|}\exp(-2|X|(np-1)^2)\nonumber\\
&\le & 2^n\exp(-2\varepsilon n(np-1)^2)\rightarrow 0.\nonumber
\end{eqnarray}
Thus, with probability $1-o(1)$, for any $X\subseteq A$, with $|X|\geq \varepsilon n$,  $|N_G(X)|\geq |X|$. Therefore, from Theorem \ref{mhall}, $G$ has a matching of size $n(1-\varepsilon)$ with probability $1-o(1)$, when $np\rightarrow\infty$. 
\end{proof}

\begin{remark}\label{rematched}
Lemma \ref{matched} immediately gives that for a random bipartite graph  $\G(m,n,p)$ with $(m\wedge n)p\rightarrow \infty$,  $\lim_{m, n \rightarrow \infty}\mathbb P(\mathbb G(m, n, p)~has~a~matching~of~size~(m\wedge n)(1-\epsilon))=1$.
\end{remark}

\subsection{Ricci Curvature of Random Bipartite Graphs} 

We are now ready to state and prove our result on the Ricci curvature of random bipartite graphs. Let $\mathbb G(n, n, p)$ be a random bipartite graph with bipartition $(A_n, B_n)$. 
Let $a \in A_n$ and $b\in B_n$ be two fixed vertices. For $G\sim \mathbb G(n, n, p)$, conditioned
on the edge $(a, b)$ being present, denote by $\kappa_n(a, b)$ the Ricci curvature of the edge $(a, b)$ in $G$.





\begin{thm}\label{bp}
Let $\mathbb G(n, n, p_n)$ be the distribution of a random bipartite graph with bipartition $(A_n, B_n)$, conditioned on the edge $(a, b)$ being present. 
\begin{enumerate}[(a)]
\item
If $np_n\rightarrow 0$ then  with probability $1-o(1)$ the edge $(a,b)$ is isolated, and consequently 
$\kappa_n(a, b)=0$.

\item
If $np_n\rightarrow \lambda$ for $0<\lambda<\infty$, then $\kappa_n(a,b)\stackrel{\sD}{\rightarrow}-2\left(1-\frac{1}{1+X_1}-\frac{1}{1+X_2}\right)_+$,
where $X_1, X_2$ are independent $Poisson(\lambda)$ random variables, that is, the Ricci curvature converges in distribution to Ricci curvature of its limiting tree.

\item
If $np_n\rightarrow\infty$, and $np_n^2\rightarrow 0$ then $\kappa_n(a,b)\stackrel{\sP}{\rightarrow}-2$.

\item
If $np_n^2\rightarrow\infty$, then $\kappa_n(a, b)\stackrel{\sP}{\rightarrow}0$.

\end{enumerate}
\end{thm}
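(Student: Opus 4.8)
The plan is to run all four cases off the exact bipartite formula of Theorem~\ref{th:bipartite}, which I would first recast into the more transparent shape
$$\kappa_n(a,b) = -2\Big(1 - \tfrac{1}{d_a} - \tfrac{1}{d_b} - \sum_{j=1}^q \min\big(\tfrac{|U_j|}{d_a},\tfrac{|L_j|}{d_b}\big)\Big)_+ ,$$
where $R_1,\dots,R_q$ are the components of $R(a,b)$, $U_j=V(R_j)\cap N_1(a)$, $L_j=V(R_j)\cap N_1(b)$. This follows from $\sum_j|L_j|=|N_1(b)|$ together with the componentwise identity $\ell\,\mathbf 1\{u\ge\ell\}+u\,\mathbf 1\{u<\ell\}=\min(u,\ell)$. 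Two structural facts are used throughout: the graph is bipartite, so $\Delta_G(a,b)=\emptyset$ always; and the extra neighbours of $a$ live in $B_n\setminus\{b\}$ while those of $b$ live in $A_n\setminus\{a\}$, so $d_a-1$ and $d_b-1$ are \emph{independent} $\mathrm{Bin}(n-1,p_n)$ variables.

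For part (a) I would use that $\mathbb E(d_a-1)=(n-1)p_n\to 0$, so by Markov $d_a=d_b=1$ with probability $1-o(1)$; the edge is then isolated and the display gives $-2(1-1-1)_+=0$. For part (b), a first–moment count shows the expected number of $4$-cycles through $(a,b)$ is $(n-1)^2p_n^3\sim\lambda^3/n\to 0$, so with probability $1-o(1)$ there are none, whence $N_1(a)=N_1(b)=\emptyset$ and the formula collapses to the tree expression $-2(1-\frac1{d_a}-\frac1{d_b})_+$. Since $(d_a-1,d_b-1)\stackrel{\sD}{\to}(X_1,X_2)$ with $X_1,X_2$ independent $\mathrm{Poisson}(\lambda)$ and the map $(s,t)\mapsto -2(1-\frac1{1+s}-\frac1{1+t})_+$ is bounded and continuous, the continuous–mapping theorem delivers the stated distributional limit.

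For part (c) I would show the correction term is negligible. Bounding $\sum_j\min(\frac{|U_j|}{d_a},\frac{|L_j|}{d_b})\le\frac{|N_1(a)|}{d_a}$, a direct estimate gives $\mathbb E|N_1(a)|\le (n-1)^2p_n^3\asymp n^2p_n^3$ (a neighbour of $a$ lies in $N_1(a)$ only if it shares a neighbour with $b$), while $d_a$ concentrates at $np_n$. Because $np_n\to\infty$ and $np_n^2\to 0$, this forces $\frac{|N_1(a)|}{d_a}=O_P(np_n^2)=o_P(1)$ and $\frac1{d_a}+\frac1{d_b}=o_P(1)$, so the bracket tends to $1$ and $\kappa_n(a,b)\stackrel{\sP}{\to}-2$.

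Part (d) is the crux and the main obstacle. Here the bracket must be driven down to $0$, which needs a near-perfect matching among the non-common neighbours. The key observation is that, conditionally on the neighbour sets $S_a=N_G(a)\setminus\{b\}\subseteq B_n$ and $S_b=N_G(b)\setminus\{a\}\subseteq A_n$, the edges of $G$ running between $S_a$ and $S_b$ are untouched by the conditioning and are independent $\mathrm{Bern}(p_n)$; hence the induced bipartite graph is exactly $\mathbb G(|S_a|,|S_b|,p_n)$. Since $d_a,d_b$ concentrate at $np_n$ and $(|S_a|\wedge|S_b|)p_n\asymp np_n^2\to\infty$, Remark~\ref{rematched} yields a matching $M$ of size $(|S_a|\wedge|S_b|)(1-\varepsilon)$ with probability $1-o(1)$. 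Each matched pair $(z,w)$ closes a $4$-cycle $a\,z\,w\,b$, so $M$ is a matching inside $R(a,b)$ and thus $|M|\le\sum_j\min(|U_j|,|L_j|)$; combined with $\min(\frac{|U_j|}{d_a},\frac{|L_j|}{d_b})\ge\frac{\min(|U_j|,|L_j|)}{d_a\vee d_b}$ this gives $\sum_j\min(\frac{|U_j|}{d_a},\frac{|L_j|}{d_b})\ge\frac{|M|}{d_a\vee d_b}\ge(1-\varepsilon)\frac{(d_a-1)\wedge(d_b-1)}{d_a\vee d_b}$, and the final ratio tends to $1$ in probability. Hence the bracket is at most $\varepsilon+o_P(1)$ for every $\varepsilon$, forcing $\kappa_n(a,b)\stackrel{\sP}{\to}0$. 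The two delicate points I expect to spend effort on are recognising this conditional-independence structure so that the matching lemma applies verbatim, and passing correctly from the matching size to the component-wise quantity $\sum_j\min(|U_j|,|L_j|)$ (using that a maximum matching decomposes over components and is capped by the smaller side in each).
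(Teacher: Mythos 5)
Your proposal is correct, and it takes a genuinely different (and more unified) route than the paper. Your recasting of Theorem~\ref{th:bipartite} as $\kappa_n(a,b)=-2\bigl(1-\tfrac{1}{d_a}-\tfrac{1}{d_b}-\sum_j\min(\tfrac{|U_j|}{d_a},\tfrac{|L_j|}{d_b})\bigr)_+$ is a valid algebraic identity (the indicator terms collapse exactly to the componentwise minimum), and you then drive all four regimes off this single formula. The paper, by contrast, never invokes Theorem~\ref{th:bipartite} in its proof of this result: parts (a) and (b) match yours (Markov's inequality; a first-moment count showing no $4$-cycles through $(a,b)$, then the girth-$6$ formula of Corollary~\ref{gsix} and the Poisson limit), but for part (c) the paper constructs an explicit $1$-Lipschitz test function taking values $\{0,1,2,3\}$ to force $W_1(m_a,m_b)\ge 3-o_P(1)$, paired with the trivial bound $W_1\le 3$, whereas you instead kill the correction term via the first-moment bound $\mathbb{E}|N_1(a)|\le n^2p_n^3$ together with degree concentration --- shorter, given the exact formula. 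For part (d) both arguments share the same engine (the conditional-independence observation that edges between $N_G(a)\setminus\{b\}$ and $N_G(b)\setminus\{a\}$ are untouched by the conditioning, plus Remark~\ref{rematched}), but the paper feeds the near-perfect matching into the general lower bound of Theorem~\ref{th:matching}, while you feed it into the bipartite formula, which requires the two extra bookkeeping steps you flag (a matching decomposes over the components $R_j$ and is capped by $\min(|U_j|,|L_j|)$ in each; then $\min(\tfrac{|U_j|}{d_a},\tfrac{|L_j|}{d_b})\ge \tfrac{\min(|U_j|,|L_j|)}{d_a\vee d_b}$) --- both steps are handled correctly. What each approach buys: yours is more systematic and leverages the paper's own main structural theorem, making the bipartite case essentially a corollary of it; the paper's choice of test functions and Theorem~\ref{th:matching} is deliberately formula-free, which is why the same technique transfers verbatim to the non-bipartite random graph $\mathbb G(n,p_n)$ in Theorem~\ref{gnp}, where no exact curvature formula is available.
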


\begin{proof}
Let $G\sim \mathbb G(n, n, p_n)$ be a random bipartite graph with bipartition $A_n=\{\alpha_1, \alpha_2,\ldots, \alpha_n\}$ and $B_n=\{\beta_1, \beta_2, \ldots, \beta_n\}$. For $i,j\in \{1, 2, \ldots, n\}:=[n]$ define let $\delta_{ij}=1$ if $(\alpha_i, \beta_j)\in E(G)$ and $0$, otherwise. 
Define $X^a_n:=\sum_{j\in [n]\backslash\{b\}}\delta_{aj}$ and $X_n^b:=\sum_{i\in [n]\backslash\{a\}}\delta_{ib}$. Clearly, $(X^a_n, X^b_n)$ are independent binomial random variables with parameters $(n-1,p_n)$, which are measurable with respect to $\sF_n:=\sigma(\delta_{aj},\delta_{ib }, i\in [n]\backslash\{a\}, j\in [n]\backslash\{b\})$.
\begin{enumerate}[(a)]
\item
In this case, $\mathbb{P}(\{X^a_n\ne 0\}\bigcup \{X^b_n\ne 0\})\le \mathbb{E}(X^a_n+X^b_n)\le 2np_n\rightarrow 0$. Therefore, with probability $1-o(1)$ the edge $(a, b)$ is isolated. 

\item
Observe that, in this case, 
\begin{align*}
\mathbb P\left(\sum_{i\in [n]\backslash\{b\}}\sum_{j\in [n]\backslash\{a\}}\delta_{aj}\delta_{ji}\delta_{ib}\ne 0\right)\leq \sum_{i\in [n]\backslash\{b\}}\sum_{j\in [n]\backslash\{a\}} \mathbb E(\delta_{aj}\delta_{ji}\delta_{ib})\le n^2p_n^3=O(1/n).
\end{align*}
Therefore, with probability $1-O(1/n)$ there are no 4-cycles in $G$ supported on $(a,b)$. As $G$ is bipartite, the girth of $G$ is at least 6, and Lemma \ref{gsix} implies that  $$\kappa_n(a,b)=-2\left(1-\frac{1}{1+X_n^a}-\frac{1}{1+X_n^b}\right)_+.$$ The conclusion follows on noting that $(X_n^a,X_n^b)\stackrel{\sD}\rightarrow (X_1, X_2)$, where $X_1, X_2$ are independent $Poisson(\lambda)$ random variables.

\item
Let $R_G(b)$ denote the set of vertices in $N_G(b)$ which are not connected to any vertex in $N_G(a)$. Then given $\sF_n$, 
$$|R_G(b)|\sim Bin\Big(X_n^b, 1-(1-p_n)^{X_n^a}\Big) \text{ and }\mathbb{E}\left(\frac{|R_G(b)|}{X_n^b}\Big|\mathcal{F}\right)= 1-(1-p_n)^{X_n^a}\stackrel{\sP}{\rightarrow }0,$$ 
as $np_n^2\rightarrow 0$. Consequently, we have $|R_G(b)|=o_p(X_n^b)$. Defining $R_G(a)$ by symmetry we have $|R_G(a)|=o_p(X_n^a)$. Now, consider the 1-Lipschitz function $f$ on the (random) core-neighborhood of $G$ defined as follows: 
 $$f(x)=\left\{
\begin{array}{cc}
0 & \hbox{ if } x \in N_G(a)\backslash R_G(a),\\
1 & \hbox{ if } x \in R_G(a)\bigcup\{a\},\\
2 & \hbox{ if } x \in R_G(b)\bigcup\{b\},\\
3 & \hbox{ if } x \in N_G(b)\backslash R_G(b).\\
\end{array}
\right.
$$
This implies that 
\begin{align*}
\mathbb E_{b}(f)-\mathbb E_{a}(f)=& \frac{1+2|R_G(b)|+3(X_n^b-|R_G(b)|)}{1+X_n^b}-\frac{2+|R_G(a)|}{1+X_n^a}.
\end{align*}
The RHS converges to $3$ in probability. Thus, $W_n(a,b)\ge 3-o(1)$ with probability $1-o(1)$. Moreover, by definition $W_n(a,b)\le 3$ and the conclusion follows.

\item
As bipartite graphs are triangle free, from Equation \ref{eq:kappajosttrianglefree} we know that 
$\kappa(a,b)\le 0$. So, it suffices to prove only the lower bound. 
 
To this effect,  note that on the set $C_n:=\{X_n^a\ge np_n/2, X_n^b\ge np_n/2\}$ we have $(X_n^a\wedge X_n^b)p_n\ge np_n^2/2{\rightarrow}\infty$. Thus, if we denote by $H_G(a, b)$ the random bipartite graph induced by $N_G(a)\bigcup N_G(y)$, by Remark \ref{rematched} we have  
$$\mathbb{P}(H_G(a, b) \text{ does not have a matching of size } (1-\varepsilon)(|N_G(a)| \wedge |N_G(b)|)|\sF_n,C_n)\le \delta,$$ for $n$ large enough. Let $E_n$ be the event 
$\{H_G(a, b) \text{ does not have a matching of size } (1-\varepsilon)(|N_G(a)| \wedge |N_G(b)|)\}$.
This implies that for $n$ large enough
\begin{equation*}
\mathbb{P}(E_n)\le\mathbb{E}(\mathbb{P}(E_n|\sF_n,C_n))+\mathbb{P}(C_n^c)\le \delta+\mathbb P(\{X_n^a\wedge X_n^b\ge np_n/2\}^c).
\end{equation*}
As $(X_n^a\wedge X_n^b)/np_n\stackrel{\sP}{\rightarrow}1$, when $np_n\rightarrow \infty$, the above implies that with probability $1-o(1)$ there exists a bipartite matching between in $H_G(a, b)$ of size $(1-\varepsilon)(|N_G(a)| \wedge |N_G(b)|)$. This observation, together with Theoem \ref{th:matching}, would imply that with probability $1-o(1)$ we have  
 $$\lim_{n \rightarrow \infty}\kappa_n(a,b)\ge -2\Big(1-\frac{(X_n^a\wedge X_n^b)(1-\varepsilon)+1}{(1+X_n^a)\vee (1+X_n^b)}\Big)=-2\varepsilon.$$ As $\varepsilon$ is arbitrary, we have $\kappa_n(a, b)\stackrel{\sP}{\rightarrow}0$.

\end{enumerate}
\end{proof}

\subsection{Ricci Curvature of Erd\H os-R\'enyi random graphs}

Building on the techniques developed in the previous section, we now determine the limiting behavior of Ricci curvature for Erd\H os-R\'enyi random graphs $\mathbb G(n, p_n)$ in different regimes of $p$.

\begin{thm}\label{gnp}
Let $\mathbb G(n, p_n)$ be the distribution of a Erd\H os-R\'enyi random graph with vertex set $V_n$, conditioned on the edge $(a, b)$ being present. 
\begin{enumerate}[(a)]
\item

If $np_n\rightarrow 0$ then  with probability $1-o(1)$ the edge $(a,b)$ is isolated, and consequently 
$\kappa_n(a, b)=0$.

\item
If $np_n\rightarrow \lambda$ for $0<\lambda<\infty$, then $\kappa_n(a,b)\stackrel{\sD}{\rightarrow}-2\left(1-\frac{1}{1+X_1}-\frac{1}{1+X_2}\right)_+$,
where $X_1, X_2$ are independent $Poisson(\lambda)$ random variables, that is, the Ricci curvature converges in distribution to Ricci curvature of its limiting tree.

\item
If $np_n\rightarrow \infty, n^2p_n^3\rightarrow 0$, then $\kappa_n(a, b)\stackrel{\sP}{\rightarrow}-2.$

\item
If $n^2p_n^3\rightarrow\infty , np_n^2\rightarrow 0$, then $\kappa_n(a, b)\stackrel{\sP}{\rightarrow}-1$.

\item
If $np_n^2\rightarrow\infty, p_n\rightarrow 0$, then $\kappa_n(a, b)\stackrel{\sP}{\rightarrow }0.$

\item
If $p_n\rightarrow p$ with $0<p<1$, then $\kappa_n(a, b)\stackrel{\sP}{\rightarrow} p.$
\end{enumerate}
\end{thm}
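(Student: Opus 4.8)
The plan is to run the same regime-by-regime analysis as for the bipartite model in Theorem~\ref{bp}, the essential new feature being that the common-neighbour set $\Delta_G(a,b)=N_G(a)\cap N_G(b)$ (triangles on $(a,b)$), forced to be empty in the bipartite case, is now present and governs the positive part of the curvature. Throughout I would condition on the $\sigma$-field $\mathcal{F}_n$ generated by the edges incident to $a$ and $b$, so that $d_a,d_b\sim 1+\mathrm{Bin}(n-2,p_n)$ and both concentrate at $np_n$ whenever $np_n\to\infty$, and I would track three first-moment functionals: the expected number of common neighbours is of order $np_n^2$; the expected fraction of neighbours of $a$ adjacent to $N_G(b)$ (on a triangle or $4$-cycle) is also of order $np_n^2$; and the expected fraction of neighbours of $a$ within graph-distance $2$ of $N_G(b)$ (on a $4$- or $5$-cycle) is of order $n^2p_n^3$. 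Matching these against the thresholds $np_n^2$ and $n^2p_n^3$, together with $|\Delta_G(a,b)|/(d_a\vee d_b)\asymp p_n$, is precisely what separates the six regimes.

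Parts (a) and (b) are immediate adaptations of Theorem~\ref{bp}. For (a) a first-moment bound shows the edge is isolated with probability $1-o(1)$, so $\kappa_n(a,b)=0$. For (b), $np_n\to\lambda$ forces $np_n^2,\,n^2p_n^3,\,n^3p_n^4\to0$, so a union bound removes all $3$-, $4$- and $5$-cycles supported on $(a,b)$ with probability $1-o(1)$; Corollary~\ref{gsix} then gives the exact tree value $\kappa_n(a,b)=-2(1-\tfrac1{d_a}-\tfrac1{d_b})_+$, and the convergence of $(d_a-1,d_b-1)$ to a pair of independent $\mathrm{Poisson}(\lambda)$ variables yields the stated distributional limit.

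The dense end (e)--(f) and the sparse end (c) are sandwich arguments pitting the Jost--Liu upper bound $\kappa_n(a,b)\le |\Delta_G(a,b)|/(d_a\vee d_b)$ of Theorem~\ref{th:jost} against a matching lower bound. In (c), $n^2p_n^3\to0$ forces $np_n^2\to0$, so whp there are no triangles (hence $\kappa\le0$, while $\kappa\ge-2$ always since $W_1\le 3$) and only $o(d_a)$ neighbours of each endpoint lie within distance $2$ of the other neighbourhood; assigning the four-level $1$-Lipschitz test function (values $-1$ on $N_G(a)\setminus\{b\}$, $0$ and $1$ at $a$ and $b$, $2$ on $N_G(b)\setminus\{a\}$, the $o(d_a)$ exceptional vertices pushed to an intermediate level) gives $W_1\ge 3-o(1)$, whence $\kappa_n(a,b)\to-2$ in probability. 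In (e), $np_n^2\to\infty$ makes the upper bound $|\Delta_G(a,b)|/(d_a\vee d_b)\asymp p_n\to0$, while the cross-edges between the non-common neighbourhoods $Q_G(a),Q_G(b)$ are, conditionally on $\mathcal{F}_n$, independent $\mathrm{Bernoulli}(p_n)$; since $|Q_G(a)|\wedge|Q_G(b)|\asymp np_n$ and $np_n\cdot p_n=np_n^2\to\infty$, Remark~\ref{rematched} produces a near-perfect matching and Theorem~\ref{th:matching} gives $\kappa\ge-2\varepsilon-o(1)$ for every $\varepsilon$, so $\kappa_n(a,b)\to0$. Part (f) is identical with $p_n\to p$: concentration yields $|\Delta_G(a,b)|/(d_a\vee d_b)\to p$ and $(d_a\wedge d_b)/(d_a\vee d_b)\to1$, the matching between $Q_G(a)$ and $Q_G(b)$ is again near-perfect because $np_n^2(1-p_n)\to\infty$, and the two bounds pin $\kappa_n(a,b)$ at $p$.

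The genuinely new regime is (d), where $np_n^2\to0$ (still no triangles, and only a vanishing fraction of neighbours on $4$-cycles) while $n^2p_n^3\to\infty$ (length-$2$ connections between $N_G(a)$ and $N_G(b)$ abundant); this intermediate value $-1$ has no bipartite analogue precisely because a distance-$2$ link between the two neighbourhoods needs an odd cycle. For the upper bound I would use the three-level function equal to $0$ on $N_G(a)\setminus\{b\}$, to $1$ at $b$ and on the $o(d_b)$ neighbours of $b$ adjacent to $N_G(a)$, and to $2$ on the remaining neighbours of $b$; the absence of $4$-cycles makes this $1$-Lipschitz and gives $E_b(f)-E_a(f)\to2$, so $\kappa\le-1+o(1)$. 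For the lower bound I would produce a near-perfect \emph{$2$-matching} in $H_G(a,b)$ and invoke Lemma~\ref{bp-use}, which gives $\kappa\ge-2+(1-\varepsilon)(d_a\wedge d_b)/(d_a\vee d_b)-o(1)\to-1$. The main obstacle is exactly this last step: unlike the direct cross-edges of (e)--(f), the distance-$2$ adjacencies between $R_G(a)$ and $R_G(b)$ share their intermediate vertices and are therefore \emph{dependent}, so Lemma~\ref{matched}/Remark~\ref{rematched} cannot be applied verbatim. I expect to route around this by verifying the strong Hall condition of Theorem~\ref{mhall} directly for the distance-$2$ bipartite graph, showing via a Hoeffding-plus-union-bound estimate (as in the proof of Lemma~\ref{matched}) that, on the high-probability event $d_a,d_b\asymp np_n$, every $X\subseteq R_G(a)$ with $|X|\ge\varepsilon|R_G(a)|$ has distance-$2$ image of size at least $|X|$ inside $R_G(b)$, the relevant coverage probability being controlled by $n^2p_n^3\to\infty$.
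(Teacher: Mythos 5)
Your proposal tracks the paper's proof almost step for step in five of the six parts: (a)--(b) are identical (first moment, then cycle removal plus Corollary~\ref{gsix} and Poisson convergence); in (c) you use the same exceptional set (neighbours of one endpoint within distance $2$ of the other neighbourhood) and the same staircase test function, shifted by one; in (e)--(f) you use the same sandwich of the Jost--Liu upper bound against Theorem~\ref{th:matching}, with the near-perfect matching supplied by Remark~\ref{rematched} applied to the genuinely i.i.d.\ cross-edges between the non-common neighbourhoods; and in (d) your upper-bound function is exactly the paper's three-level function. (One wording slip there: in regime (d) $4$-cycles are abundant, not absent; the function is $1$-Lipschitz because there are no triangles and because the $4$-cycle endpoints $R_1(b)$ are placed at level $1$ rather than $2$.)

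The one genuine divergence is how the near-perfect $2$-matching in (d) is produced. The paper forms the auxiliary distance-$2$ graph $\mathcal{H}(G)$ on $N_G(a)\cup N_G(b)$, notes its edges are positively correlated increasing functions of the underlying edges, and asserts that one may ``w.l.o.g.''\ treat them as independent Bernoulli$(\hat p_n)$ since independence ``would further reduce the probability of a matching'', after which Remark~\ref{rematched} applies. You instead propose to verify the Hall-type condition of Theorem~\ref{mhall} directly for the dependent distance-$2$ graph via two-round edge exposure, Hoeffding, and a union bound. Your instinct is sound, and arguably more solid than the paper's shortcut: positive association of increasing indicators does \emph{not} in general imply that their joint law dominates the product measure with the same marginals for increasing events (two perfectly correlated edges contain at least one edge with probability $1/2$, versus $3/4$ under the product law), so the paper's reduction is precisely the step that merits the scrutiny you give it. However, your stated goal --- that every $X\subseteq R_G(a)$ with $|X|\ge\varepsilon|R_G(a)|$ has distance-$2$ image of size at least $|X|$ --- is too strong and can fail with high probability when $n^2p_n^3\to\infty$ slowly: the expected number of vertices of $R_G(b)$ with \emph{no} distance-$2$ neighbour in $R_G(a)$ is of order $np_n e^{-\Theta(n^2p_n^3)}$, which can diverge, so taking $X$ nearly all of $R_G(a)$ when the two sides have nearly equal size violates the exact Hall condition. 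What you should verify is the defect form that Theorem~\ref{mhall} is built for: with probability $1-o(1)$, every such $X$ misses at most $\varepsilon|R_G(a)|$ vertices of $R_G(b)$, whence $\delta_{\max}\le\varepsilon|R_G(a)|$ and a matching of size $(1-\varepsilon)\bigl(|R_G(a)|\wedge|R_G(b)|\bigr)$ exists. Your estimates do deliver this weaker statement: exposing first the edges from $X$ to intermediate vertices outside $N_G(a)\cup N_G(b)$ (giving $|W|\gtrsim \varepsilon n^2p_n^2$ concentrated), then the independent edges from $W$ to $R_G(b)$ (per-vertex miss probability $e^{-\Theta(n^2p_n^3)}$), beats the union bound over $2^{O(np_n)}$ sets. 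Incidentally, the same exact-versus-defect slip occurs in the paper's own proof of Lemma~\ref{matched}, so correct it in both places if you write this up in full.
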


\begin{proof}

Let $V_n=\{v_1, v_2,\ldots, v_n\}$ and $G\sim \mathbb G(n, p_n)$ be a random graph with vertex set $V_n$, conditioned on the edge $(a, b)$ being present, for 2 fixed vertices $a, b\in V_n$. 
For $\{i,j\}\subseteq \{1, 2, \ldots, n\}:=[n]$ define let $\delta_{ij}=1$ if $(v_i,v_j)\in E(G)$ and $0$, otherwise. 
Define $X^a_n:=\sum_{j\in [n]\backslash\{a,b\}}\delta_{aj}$ and $X_n^b:=\sum_{i\in [n]\backslash\{a,b\}}\delta_{ib}$. Clearly, $(X^a_n, X^b_n)$ are independent binomial random variables with parameters $(n-2,p_n)$, which are measurable with respect to $\sF_n:=\sigma(\delta_{aj},\delta_{ib }, i\in [n]\backslash\{a, b\}, j\in [n]\backslash\{a, b\})$.

\begin{enumerate}[(a)]
\item
By Markov's inequality we have $\mathbb{P}(\{X^a_n+X^b_n\neq 0\})\le \mathbb{E}(X^a_n+X^b_n)\le 2np_n\rightarrow 0$. So, the edge $(a, b)$ is isolated with probability $1-o(1)$, and  $\kappa_n(a, b)=0$.

\item
We will show that in this case there are no cycles of length 3, 4, or 5 supported on $(a, b)$ with probability $1-O(1/n)$. In this regard, note that
\begin{align*}
\mathbb{P}\left(\exists \text{ a 3-cycle supported on } (a, b)\right)\le & \sum_{j\in [n]\backslash\{a,b\}}\mathbb{E}(\delta_{aj}\delta_{bj})\le np_n^2=O(1/n),\\
\mathbb{P}\left(\exists \text{ a 4-cycle supported on } (a, b)\right)\le &\sum_{\{j, k\}\subset [n]\backslash\{a,b\}} \mathbb{E}(\delta_{aj}\delta_{jk}\delta_{bk})\le n^2p_n^3=O(1/n),\\
\mathbb{P}\left(\exists \text{ a 5-cycle supported on } (a, b)\right)\le &\sum_{\{j, k, l\}\subset [n]\backslash\{a,b\}}\mathbb{E}(\delta_{aj}\delta_{jk}\delta_{kl}\delta_{bl})\le n^3p_n^4=O(1/n).
\end{align*}
This implies that $\mathbb{P}(\exists \text{ a 3, 4, or 5 cycle supported on } (a, b))=O(1/n)$. Therefore, with probability $1-O(1/n)$, the girth of $G$ is at least 6, and Lemma \ref{gsix} implies that  $$\kappa_n(a,b)=-2\left(1-\frac{1}{1+X_n^a}-\frac{1}{1+X_n^b}\right)_+.$$ The conclusion follows on noting that $(X_n^a,X_n^b)\stackrel{\sD}\rightarrow (X_1, X_2)$, where $X_1, X_2$ are independent $Poisson(\lambda)$ random variables.

\item
As in the previous case, the probability of a triangle and a quadrilateral supported on $(a, b)$ is bounded by $np_n^2$ and $n^2p_n^3$, and so with probability $1-o(1)$ there are no 3 and 4-cycles supported on $(a,b)$. Let $Q_2(b)=\{x\in N_G(b): d_G(x, N_G(a))=2\}$. Note that 
$$|Q_2(b)|\Big|\sF_n \sim Bin(X_n^b, 1-(1-p_n^2)^{(n-4)X_n^a}) \text{ and } \mathbb{E}\Big(\frac{|Q_2(b)|}{X_n^b}\Big|\sF_n\Big)=1-(1-p_n^2)^{(n-4)X_n^a}\stackrel{\sP}{\rightarrow}0.$$

Now, consider the 1-Lipschitz function $f$ on the (random) core-neighborhood of $G$ defined as follows: 
$$f(x)=\left\{
\begin{array}{cc}
0 & \hbox{ if } x \in N_G(a),\\
2 & \hbox{ if } x \in Q_2(b)\bigcup\{b\},\\
3 & \hbox{ if } x \in N_G(b)\backslash Q_2(b),\\
1 & \hbox{ otherwise. }
\end{array}
\right.
$$
Then we have
\begin{align*}
\mathbb E_b(f)-\mathbb E_b(f)=\frac{1+2|Q_2(b)|+3(X_n^b-|Q_2(b)|)}{1+X_n^b}-\frac{2}{1+X_n^a}.
\end{align*}
The RHS converges to $3$ in probability, and the conclusion follows from arguments similar to the proof of part (c) of Theorem \ref{bp}.

\item
In this case there are no triangles supported on $(a, b)$ with probability $1-o(1)$. Define $R_1(a)=\{x\in N_G(a): d_G(x, N_G(b))=1\}$, and $R_1(b)$ similarly. By an argument similar to the previous case we have 
$R_1(a)=o_p(X_n^a)$ and $R_1(b)=o_p(X_n^b)$.  Now, for any $z_1\in N_G(a)\backslash R_1(a)$ and $z_2\in N_G(b)\backslash R_1(b)$ we have $d(z_1,z_2)\ge 2$, and so the function $f$ defined below is 1-Lipschitz:

$$f(x)=\left\{
\begin{array}{cc}
0 & \hbox{ if } x \in N_G(a)\bigcup \{a\},\\
2 & \hbox{ if } x \in N_G(b)\backslash R_1(b),\\
1 & \hbox{ otherwise. }
\end{array}
\right.
$$
Plugging in $f$ we have
$$\mathbb E_a(f)-\mathbb E_a(f)= \frac{|R_2(b)|+2(X_n^b-|R_2(b)|)}{1+X_n^b}-\frac{1}{1+X_n^a}.$$
The RHS converges to $2$ in probability, proving the upper bound for $\kappa_n(a, b)$.
\\

To prove the lower bound, assuming $\Delta_G(a, b)=\emptyset$ we claim that with probability $1-o(1)$ there exists a 2-matching between $N_G(a)$ and $N_G(b)$ of size $(1-\varepsilon)(X_n^a\wedge X_n^b)$. Assuming the claim is true by of Lemma \ref{bp-use}  
$$\kappa_n(a, b)\ge -2+\frac{2+(1-\varepsilon)X_n^a\wedge X_n^b}{(1+X_n^a)\vee (1+X_n^b)}.$$
The RHS converges to $-1-\varepsilon$ in probability, which would finish the proof of (d). 

To complete the proof we need to verify the claim, that is, construct a 2-matching between $N_G(a)$ and $N_G(b)$ 
in the subgraph $H_G(a,b)$. Define a new random graph $\sH(G)$ 
with $V(\sH(G))=N_G(a)\bigcup N_G(b)$ and an edge between $i\in N_G(a)$ and $j\in N_G(b)$ if and only if in the original random graph $G$ there is a path from $i$ to $j$ of length  $2$. 

By this construction, the probability that there is an edge between $i$ and $j$ in $\sH(G)$ is $\hat{p}_n:=1-(1-p_n^2)^{n-4}$. By  a Taylor's expansion we have
 $$|1-(1-p_n^2)^{n-4}-np_n^2|\le 4p_n^2+\frac{1}{2}n^2p_n^4(1+p_n^2)^{n-6}= np_n^2\left(\frac{4}{n}+\frac{1}{2}np_n^2 e^{np_n^2}\right)= np_n^2o(1),$$
and so for all large $n$ we have $\hat{p}_n\ge np_n^2/2$. Note that there is a 2-matching in $H_G(a, b)$ of size $(1-\varepsilon)(X_n^a\wedge X_n^b)$ if and only if $\sH(G)$ has a 1-matching of size $(1-\varepsilon)(X_n^a\wedge X_n^b)$. Since existence of a matching is a monotone property and the edges in $\sH(G)$ are positively correlated,  w.l.o.g. we may assume that  the edges in $\sH(G)$ are independent, as that would further reduce the probability of a  matching. Now, on the set  
$C_n:=\{X_n^a\ge np_n/2,X_n^b\ge np_n/2\}$ we have $(X_n^a\wedge X_n^b)\hat{p}_n\ge np_n\hat{p}_n/2\ge n^2p_n^3/4\rightarrow\infty$. Thus, by Lemma \ref{rematched} we have  $$\mathbb{P}( \sH(G)\text{ has no matching of size } (1-\varepsilon)(X_n^a\wedge X_n^b) |\sF_n ,C_n, |\Delta_G(a, b)|=0)\le \delta,$$ 
from which the claim follows, as $\mathbb P(C_n)\rightarrow 1$.

\item
In this case there may be triangles supported on $(a, b)$. Recall $R_G(a)=N_G(a)\backslash \Delta_G(a, b)$ and $R_G(b)=N_G(b)\backslash \Delta_G(a, b)$, and $H_G(a, b)$ is the subgraph of $G$ induced by the vertices in $R_G(a)\bigcup R_G(b)$. This implies that $|R_G(a)|=X_n^a-|\Delta_G(a, b)|, |R_G(b)|=X_n^b-|\Delta_G(a, b)|$. As $[(X_n^a\wedge X_n^b)-|\Delta_G(a, b)|]p_n\stackrel{\sP}{\rightarrow}\infty$, by Remark \ref{rematched} there exists a matching in $H_G(a, b)$ of size $(1-\varepsilon)(X_n^a\wedge X_n^b-\Delta_G(a, b))$ with probability $1-o(1)$, and so by Theoem \ref{th:matching} we have
\begin{align*}
\kappa_n(a, b)\ge &\frac{|\Delta_G(a, b)|}{(1+X_n^a)\vee (1+X_n^b)}-2\Big(1-\frac{(X_n^a\wedge X_n^b-\Delta_G(a, b))(1-\epsilon)+|\Delta_G(a, b)|}{(1+X_n^a)\vee (1+X_n^b)}\Big).
\end{align*}
The RHS converges to $-2\varepsilon$ in probability, as $|\Delta_G(x, y)|=o_p(X_n^a\wedge X_n^b)$. This proves that $\kappa_n(a, b)\geq 0$ in probability. Moreover, by Equation \ref{eq:kappajosttrianglefree} we have $\kappa_n(a, b)\leq \frac{|\Delta_G(x, y)|}{(X_n^a+1)\wedge (X_n^b+1)}$ which converges to $0$ in probability, completes the proof.

\item
 By a similar argument as in the previous case, there exists a matching between $R_G(a)$ and $R_G(b)$ of size $(1-\varepsilon)(X_n^a\wedge X_n^b-|\Delta_G(a, b)|)$, and so by Theoem \ref{th:matching} we have 
\begin{align*}
\kappa_n(a, b)\ge &\frac{|\Delta_G(a, b)|}{(1+X_n^a)(1+X_n^b)}-2\Big(1-\frac{(X_n^a\wedge X_n^b-|\Delta_G(a, b)|)(1-\varepsilon)+|\Delta_G(a, b)|}{(1+X_n^a)\vee (1+X_n^b)}\Big).
\end{align*}
The RHS converges to $p-2\varepsilon(1-p)$, which together with Equation \ref{eq:kappajosttrianglefree} implies that $\kappa_n(a, b)\stackrel{\sP}\rightarrow p$.

\end{enumerate}
\end{proof}

\small{\noindent{\bf Acknowledgement:} The authors are indebted to their advisor Persi Diaconis for introducing them to the problem and for his inspirational guidance. The authors thank Sourav Chatterjee, Austen Head, and Susan Holmes for useful discussions. The authors also thank the anonymous referees for valuable comments which improved the presentation of the paper.}


\begin{thebibliography}{9}
\small
\bibitem{bakry}
D. Bakry, Functional inequalities for Markov semigroups,  {\it Probability Measures on Groups: Recent Directions and Trends (S. G. Dani, P. Graczyk (eds.)}, 91--147, Tata, Bombay, 2006.
\bibitem{bakryemery}
D. Bakry, M. \'Emery, Diffusions hypercontractives, {\it S\'eminaire de probabilit\'es, XIX, 1983/84}, 177--206, Lecture Notes in Math. 1123, Springer, Berlin, 1985.
\bibitem{bauer_jost_liu}
F. Bauer, J. Jost, and S. Liu, Ollivier-Ricci curvature and the spectrum of the normalized graph Laplace operator, {\it Mathematical Research Letters}, Vol. 19 (6), 1185--1205, 2012.
\bibitem{chopaeng}
H. J. Cho, S.-H. Paeng, Ollivier's Ricci curvature and the coloring of graphs, {\it European Journal of Combinatorics}, Vol. 34, 916--922, 2013.
\bibitem{chungyau}
Fan Chung, S.-T. Yau, Logarithmic Harnack inequalities, {\it Mathematics Research Letters}. Vol. 3, 793--812, 1996.
\bibitem{diaconis_mcmc}
P. Diaconis, The Markov Chain Monte Carlo revolution, {\it Bulletin American Mathematical Society (New Series)}, Vol. 46, 179--205, 2009.
\bibitem{diaconis_saloff-coste}
P. Diaconis and L. Saloff-Coste, Logarithmic Sobolev inequalities for finite Markov chains,  {\it Annals of Applied Probability}, Vol. 6 (3), 695--750, 1996.
\bibitem{diaconis_stroock}
P. Diaconis, D. Stroock, Geometric bounds for eigenvalues of Markov chains, {\it Annals of Applied Probability}, Vol. 1, 36--61, 1991.
\bibitem{diestel} 
R. Diestel, {\it Graph Theory}, Graduate Texts in Mathematics, Volume 173 (4th Ed.), Springer-Verlag, Heidelberg, 2010.
\bibitem{erdosrenyi}
P. Erd\H os and A. R\'enyi, On the existence of a factor of degree one of a connected random graph, {\it Acta. Math. Acad. Sci. Hungar.}, Vol. 17, 359--368, 1966.
\bibitem{fulman}
J. Fulman, E. Wilmer, Comparing eigenvalue bounds for Markov chains: When does Poincare beat Cheeger?, {\it Annals of Applied Probability}, Vol. 9, 1--13, 1999.
\bibitem{janson}
S. Janson, T. Luczak, and A. Ruci\'nski, {\it Random Graphs}, New York, Wiley, 2000.
\bibitem{jost}
J. Jost and S. Liu, Ollivier's Ricci curvature, local clustering and curvature dimension inequalities on graphs, {\it Discrete \& Computational Geometry}, Vol. 51, 300--322, 2014.
\bibitem{joulinollivier}
A. Joulin and Y. Ollivier, Curvature, concentration and error estimates for Markov Chain Monte Carlo, {\it Annals of Probability}, Vol. 38 (6), 2418--2442, 2010.
\bibitem{rc_reduced}
C. Kelly, Exact expressions and reduced linear programmes for the Ollivier curvature in graphs, arXiv:1909.12156, 2019. 
\bibitem{peresbook}
D. A. Levin, Y. Peres, and E. L. Wilmer, {\it Markov Chains and Mixing Times}, Amer. Math. Soc., Providence, RI, 2009.
\bibitem{ricciflat}
Y. Lin, L. Lu, and S. T. Yau, Ricci-flat graphs with girth at least five, {\it Communications in Analysis and Geometry},
Vol. 22 (4), 1--17, 2014.
\bibitem{tohoku}
Y. Lin, L. Lu, and S. T. Yau, Ricci Curvature of graphs, {\it Tohoku Math. J.}, Vol. 63 (4), 605--627, 2011.
\bibitem{linmrl}
Y. Lin and S. T. Yau, Ricci curvature and eigenvalue estimate on locally finite graphs, {\it Math.
Res. Lett.}, Vol. 17 (2), 343--356, 2010.
\bibitem{loisel_romon}
B. Loisel and P. Romon, Ricci curvature on polyhedral surfaces via optimal transportation, {\it Axioms}, Vol. 3(1), 119--139, 2014.
\bibitem{lottvillani}
J. Lott and C. Villani, Ricci curvature for metric measure spaces via optimal transport, {\it Ann.
of Math.} 169 (3), 903--991, 2009.
\bibitem{ohta} 
S. Ohta, On measure contraction property of metric measure spaces, {\it Comment.
Math. Helv.}, Vol. 82, 805--828, 2007.
\bibitem{ollivier}
Y. Ollivier, Ricci curvature of metric spaces, {\it C. R. Math. Acad. Sci. Paris}, Vol. 345, 11, 643--646, 2007.
\bibitem{ollivierthesis}
Y. Ollivier, Ricci curvature of Markov chains on metric spaces, {\it J. Funct. Anal.}, Vol. 256 (3), 810--864, 2009.
\bibitem{olliviersurvey}
Y. Ollivier, A survey of Ricci curvature for metric spaces and Markov chains, {\it Probabilistic approach to geometry}, 343-381, Adv. Stud. Pure Math., 57, Math. Soc. Japan, Tokyo, 2010.
\bibitem{paeng}
S.-H. Paeng, Volume and diameter of a graph and Ollivier's Ricci curvature, {\it European J. Comb.}, Vol. 33, 1808--1819, 2012.
\bibitem{schrijver}
A. Schrijver, {\it Combinatorial Optimization: Polyhedra and Efficiency}, Algorithms and Combinatorics, Vol. 24, Berlin, Springer, 2003.
\bibitem{sturmI}
K. -T. Sturm, On the geometry of metric measure spaces, I, {\it Acta
Math.}, Vol. 196, 65--131, 2006.
\bibitem{sturmII}
K. -T. Sturm, On the geometry of metric measure spaces, II, {\it Acta
Math.}, Vol. 196, 133--177, 2006.
\bibitem{villani}
C. Villani, {\it Topics in optimal transportation}, Graduate Studies in Mathematics, 58, American Mathematical Society, Providence, RI, 2003.
\bibitem{yemelichev}
V. A. Yemelichev, M. M. Kovlev, and M. K. Kravtsov, {\it Polytopes, Graphs, Optimization}, (Translated by G. H. Lawden), Cambridge University Press, Cambridge, 1984
\bibitem{ziegler}
G. M. Ziegler, {\it Lectures on Polytopes}, Graduate Texts in Mathematics, 152 (1st Ed.), New York: Springer, 2007.
\end{thebibliography}
\end{document}